\def\th@plain{%
	\upshape %\itshape % body font
}
\renewenvironment{proof}[1][\proofname]{\par
	\pushQED{\qed}%
	\normalfont \topsep6\p@\@plus6\p@\relax
	\trivlist
	\item[\hskip\labelsep
	\bfseries
	#1\@addpunct{.}]\ignorespaces
}{%
\popQED\endtrivlist\@endpefalse
}
\newtheorem{theorem}{Theorem}
\newtheorem{lem}{Lemma}
\newtheorem{cor}{Corollary}
\theoremstyle{definition}
\newtheorem{definition}{Definition}
\newtheorem{obs}{Observation}
\newtheorem{clm}{Claim}
\def\int{\mathrm{int}}
\def \int {\rm int}
\begin{document}
	\title{Degree-truncated choice number of planar graphs }
	\author{Yiting Jiang$^1$\thanks{   Grant numbers:  NSFC 12301442, BK20230373, 23KJB110018.} \and Huijuan Xu$^2$ \and Xinbo Xu$^2$ \and Xuding Zhu$^2$\thanks{ Grant number:  NSFC 12371359.} }
\date{%
    $^1$School of Mathematical Sciences, Ministry of Education Key Laboratory of NSLSCS, Nanjing Normal University\\%
    $^2$School of Mathematical Sciences, Zhejiang Normal University\\[2ex]%
    \today
}
 
	\maketitle
	
	\begin{abstract}
	Assume $G$ is a graph and $k$ is a positive integer. Let $f:V(G)\to \mathbb{N}$ be defined as $f(v)=\min\{k,d_G(v)\}$. If $G$ is $f$-choosable, then we say $G$ is degree-truncated $k$-choosable. Answering a question of Richter, it was proved in [Zhou,Zhu,Zhu, Degree-truncated  choice number of graphs, arXiv:2308.15853] that there exists a 3-connected non-complete planar  graph that is not degree-truncated 7-choosable, and every 3-connected non-complete planar graph  is  degree-truncated 16-choosable. This paper improves the bounds, and proves that there exists a 3-connected non-complete planar graph that is not degree-truncated 8-choosable, and that every 3-connected non-complete planar graph is degree-truncated $12$-choosable.
 
 \textbf{Keywords}\ {List colouring, \ choice number, \ degree-choosable,\ degree-truncated $k$-choosable,\ planar graphs}
\end{abstract}
\section{Introduction}

A \textit{list assignment} $L$ for a graph $G$ is a mapping that assigns a set $L(v)$ of permissible colours to each vertex $v$ of $G$.
We denote by $\mathbb{N}^G$   the set of mappings from $V(G)$ to $\mathbb{N}={\left\{0,1, \ldots \right\} }  $. 
For $f \in \mathbb{N}^G$, an $f$-list assignment is an assignment $L$ with $|L(v)| \geq f(v)$ for each vertex $v$.
An $L$-colouring of $G$ is a mapping $\phi$ that assigns a colour $\phi(v)$ from $L(v)$ to each vertex $v$, such that $\phi(u) \ne \phi(v)$ for every edge $uv$ of $G$. We say $G$ is \emph{ $f$-choosable} if $G$ is $L$-colourable for any $f$-list assignment $L$ of $G$, and say $G$ is \emph{$k$-choosable} if $G$ is $f$-choosable for the constant mapping $f \in \mathbb{N}^G$ defined as $f(v)=k$ for all $v$. The \textit{choice number $\operatorname{ch}(G)$} of $G$ is the minimum integer $k$ such that $G$ is $k$-choosable. We say   $G$ is   {\em  degree-choosable} if it is $f$-choosable for the mapping $f \in \mathbb{N}^G$ defined as $f(v)=d_G(v)$ for $v \in V(G)$, where $d_G(v)$ is the degree of vertex $v$. Degree-choosable graphs are characterized in \cite{ERT} and \cite{VI}: A connected graph $G$ is not degree-choosable if and only if $G$ is a Gallai-tree, i.e.,  each block of $G$ is either a complete graph or an odd cycle.

 In this paper, we investigate a combination of $k$-choosability and degree-choosability, which is called degree-truncated $k$-choosability in  \cite{ZZZ}.

\begin{definition}
	\label{def-tdc}
	Assume $G$ is a graph and $k$ is a positive integer. Let $f \in \mathbb{N}^G$ be defined as   $f(v)=\min\{k,d_G(v)\}$ for each vertex $v\in V(G)$. If  $G$ is $f$-choosable, then we say $G$ is \emph{degree-truncated $k$-choosable}. The {\em degree-truncated choice number } of $G$ is the minimum integer $k$ such that $G$ is degree-truncated $k$-choosable.
\end{definition}

In the degree-truncated $k$-list colouring model, each vertex $v$ has a list of size $k$, however, if a vertex $v$ has degree $d_G(v) < k$, then its list size is reduced to $d_G(v)$. In the study of $k$-list colouring of graphs, we do not need to consider vertices of degree less than $k$, as they can always be coloured properly, no matter what colours are assigned to its neighbours. In the degree-truncated $k$-list colouring model, vertices of small degree are also critical to colourability.  

If $G$ is a Gallai-tree, then $G$ is not degree-choosable, and hence not degree-truncated $k$-choosable for any $k$. 
The degree-truncated choice of $G$ is undefined. For any connected graph $G$, its degree-truncated choice number is at most $\Delta(G)+1$.

Degree-truncated choice number of graphs was first studied by Hutchinson \cite{JH}. A classical result of Thomassen \cite{Thomassen} in list colouring says that the choice number of every planar graph is at most 5. 
A natural question is the maximum degree-truncated choice number of planar graphs.

 For any positive integer $k$, the complete bipartite graph $K_{2,k^2}$ is planar and not degree-truncated $k$-choosable. Some other planar graphs of connectivity at most 2 are easily seen to have arbitrarily large  degree-truncated choice numbers.  Bruce Richter asked whether every 3-connected non-complete planar graph is degree-truncated 6-choosable  (cf. \cite{JH}).
Motivated by this question, Hutchinson studied the degree-truncated choice number of outerplanar graphs.  
It was proved in \cite{JH} that if $G \ne K_3$ is a 2-connected maximal outerplanar graph (i.e., each inner face is a triangle), then $G$ is degree-truncated 5-choosable. If $G$ is a  2-connected  bipartite outerplanar graph, then $G$ is 4-truncated degree-choosable. The results are sharp, as there are 2-connected maximal outerplanar graphs that are not degree-truncated 4-choosable, and 2-connected bipartite outerplanar graphs that are not degree-truncated 3-choosable. Hutchinson's result was strengthened in \cite{LWZZ}, where it was proved that 2-connected $K_{2,4}$-minor free graphs other than cycles and complete graphs are degree-truncated 5-DP-colourable. In \cite{DC},   some families of $K_5$-minor free graphs are shown to be degree-truncated $k$-choosable for $k= 6,7,8$, by putting restrictions on the distance between components of the subgraph induced by vertices of degree less than $k$.

Recently, Zhou, Zhu, and Zhu  \cite{ZZZ} answered Richter's question in negative and constructed a 3-connected non-complete planar graph that is not degree-truncated 7-choosable. 
They also proved that every 3-connected non-complete planar graph is  degree-truncated $16$-DP-colourable as well as degree-truncated $16$-AT, which implies that these graphs are degree-truncated $16$-choosable, as well as online degree-truncated $16$-choosable.  Before this result, it was not known whether the degree-truncated choice number of 3-connected non-complete planar graphs is bounded by a constant.

In this paper, we prove the following results.

\begin{theorem}
	\label{Not8truncated}
	There is a 3-connected non-complete planar graph that is not degree-truncated 8-choosable.
\end{theorem}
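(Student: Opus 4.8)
The plan is to construct the required graph $G$ explicitly, together with a list assignment $L$ satisfying $|L(v)|=\min\{8,d_G(v)\}$ for every $v$ and admitting no proper $L$-colouring, and then to verify that $G$ is planar, $3$-connected and non-complete. A preliminary observation guides the design: if every vertex of a $3$-connected non-complete planar graph had degree at most $8$, then $f(v)=d_G(v)$ for all $v$, and such a graph, being connected and not a Gallai-tree, is degree-choosable and hence $8$-truncated-degree-choosable; so $G$ \emph{must} contain vertices of degree at least $9$. We therefore use a small set of \emph{anchor} vertices of very large degree, so that $f$ pins their lists to size exactly $8$ (which we may then choose freely), together with many copies of a planar \emph{obstruction gadget} attached to the anchors, all of whose interior vertices will have degree at most $8$ in $G$, so that their lists — of size equal to their degree — are also at our disposal.

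For the list assignment we fix an $8$-set on each anchor (say $\{1,\dots,8\}$, with a prescribed adjacency pattern among the few anchors), and on each gadget we choose lists of the required sizes so that the gadget cannot be completed precisely when the anchors display one designated combination of colours. Internally the gadget is built from the two classical obstructions to degree-choosability: a ``top'' vertex whose list has size $8$ and each of whose eight neighbours is driven, through a short chain of low-degree vertices with carefully chosen lists (a forcing argument in the spirit of odd cycles and cliques), to the unique colour left available to it, these eight colours being arranged to exhaust the top vertex's own list. The chains bottom out at the anchors, whose colours we do not control, so the combinatorial task is to build, for each of the at most $8^{t}$ colour combinations the $t$ anchors can display, a gadget for which the forcing succeeds on exactly that combination, while keeping every interior vertex of degree at most $8$. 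Assigning one such gadget to every combination forces some gadget to fail in every $L$-colouring, so $G$ is not $L$-colourable; roughly one additional forcing layer compared with the degree-$7$ construction of~\cite{ZZZ} is what buys the improvement from $7$ to $8$.

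It remains to make $G$ planar and $3$-connected. Planarity is arranged by drawing the gadgets in book-like fashion around the anchors and nesting the forcing chains inside their pages; since each gadget is planar with all its attachment vertices on one face, the pieces glue into a plane graph, and non-completeness is then immediate. For $3$-connectivity the anchors by themselves would form a cut, so we add a cyclic ``frame'' joining consecutive gadgets (plus, if necessary, a bounded number of extra vertices of controlled degree) and check by a short structural case analysis that deleting any two vertices leaves the frame together with the anchors connected: the only low-degree vertices are interior to gadgets, and each of those has three internally disjoint paths reaching the frame. The main obstacle is the tension built into the gadget: $3$-connectivity and planarity press toward extra edges, which raise degrees, enlarge lists, and thereby weaken the obstruction, whereas the forcing argument needs the interior lists to stay as small as the degrees; engineering a gadget whose obstruction survives every anchor colouring with all interior degrees still at most $8$ is the crux, and is precisely what caps the construction at ruling out $8$-truncated-degree-choosability.
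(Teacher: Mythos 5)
Your high-level plan matches the paper's: use a small number of high-degree ``anchor'' vertices whose lists are pinned to size $8$, attach to them one planar obstruction gadget per possible anchor colouring, so that some gadget must fail in any colouring, and then arrange the pieces so the whole thing is planar, $3$-connected and non-complete. (The paper uses exactly two anchors $x,y$, joined by an edge, with $8\cdot 7=56$ colour pairs and $56$ copies of a gadget $H$; you mention $t$ anchors and $8^t$ combinations, which is the same idea without being pinned down.) That strategic agreement is genuine.

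However, there is a real gap, and you name it yourself: ``engineering a gadget whose obstruction survives every anchor colouring with all interior degrees still at most $8$ is the crux.'' You never exhibit such a gadget, never exhibit its list assignment, and never verify the forcing argument. The substance of the theorem lives entirely in that verification. The paper's gadget $H$ has a concrete, non-obvious structure: a $K_4$ on $\{u_1,v_1,w_1,w_2\}$ with lists chosen so that one of $u_1,v_1$ is forced into $\{1,2,3\}$; then a $K_4$ on $\{u_1,u_2,s_1,s_2\}$ forcing $u_2\in\{4,5,6\}$; then $K_4$'s on $\{u_1,s_3,s_4,s_5\}$ and $\{u_2,s_6,s_7,s_8\}$ forcing $\phi(s_5)=\phi(s_8)=7$ while $s_5s_8$ is an edge. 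Each of these forcings relies on an observation about $K_4$ with three equal $3$-element lists and one enlarged list, and on list sizes matching degrees exactly (so vertices such as $s_2,s_3,t_2,t_3$ have degree $3$ and list $\{1,2,3\}$). Your description of the gadget (one ``top'' vertex with list size $8$ and eight neighbours each driven to a distinct forced colour) is a different structure, and whether it can be realised in a planar $3$-connected graph with every interior degree at most $8$ is exactly what must be checked and is not checked. Until an explicit gadget plus list assignment is produced and shown uncolourable, the argument is a plan, not a proof: the existence claim is established only by a construction, and the construction is the part you omit.

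Two smaller unverified points: the claim that the $3$-connectivity repairs (the ``cyclic frame'') can be added ``with controlled degree'' without breaking either planarity or the forcing needs to be checked against the actual gadget (in the paper, the edges $v_2^{(i)}u_2^{(i+1)}$ and the edge $xy$ are added, and one must confirm both $3$-connectivity and that the degrees of $u_2^{(i)},v_2^{(i)}$ stay within bounds — these vertices have list size $6$ in $H$, so their degree in $G$ must be $6$); and the observation that ``every possible colouring of the anchors is blocked by some gadget'' needs the list assignment on the gadget to be expressible as a function of the anchor colours in a way compatible with the truncated-degree constraint, which the paper handles by the substitution of $a\mapsto\phi(x)$, $b\mapsto\phi(y)$ in each copy. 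Neither point is addressed in the proposal.
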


\begin{theorem}
	\label{thm-main}
	Every  3-connected   non-complete planar graph  is degree-truncated 12-choosable.
\end{theorem}

   We remark that our result takes advantage of the list assignment and  does not apply to DP-colouring, on-line list colouring, or Alon-Tarsi orientation.

\section{Proof of Theorem \ref{Not8truncated}}

In this section, we present a 3-connected non-complete planar graph that is not degree-truncated 8-choosable.

 \begin{figure}[!htp]
	\begin{center}
		\includegraphics[scale=0.6]{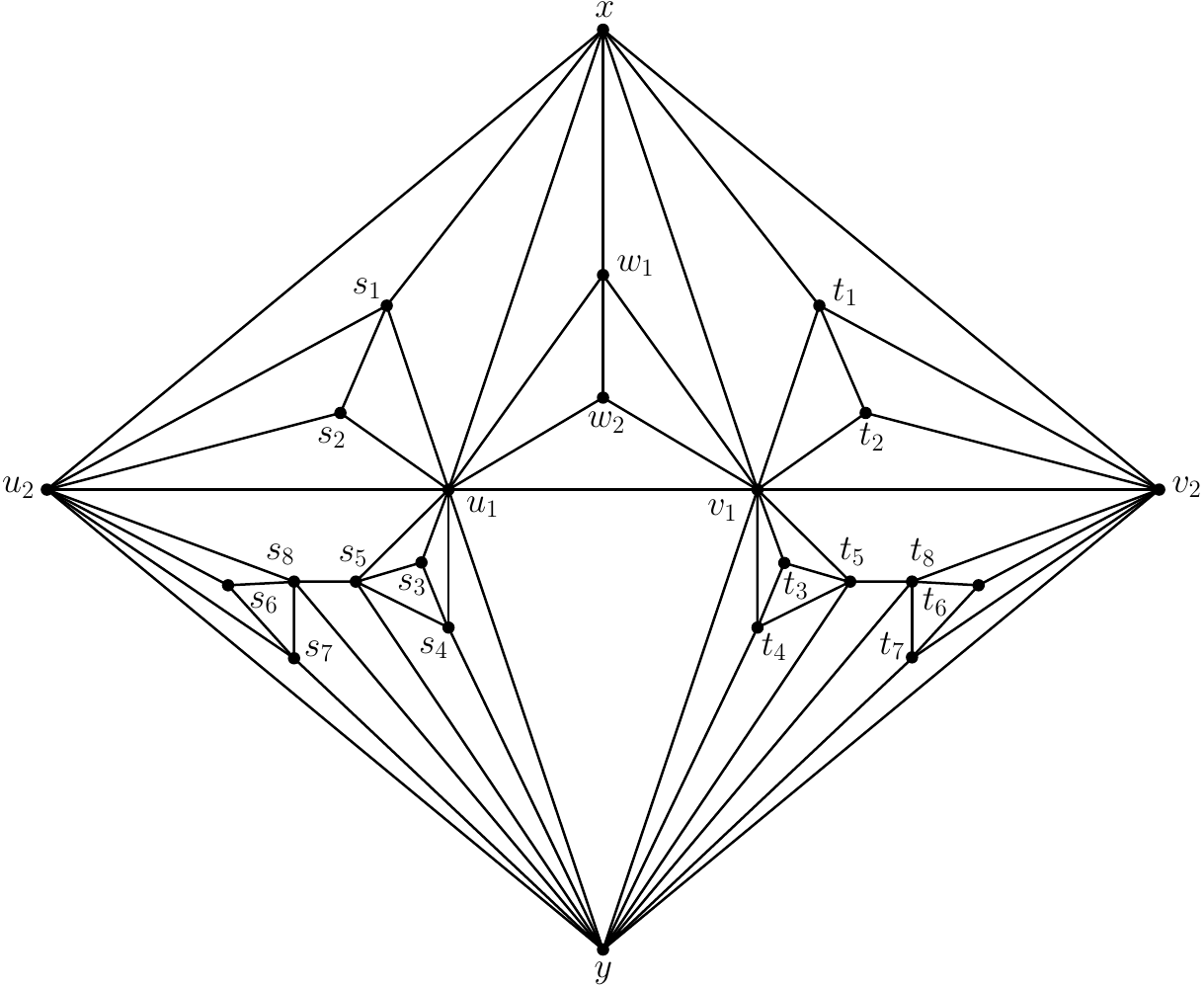}
	\end{center}
	\caption{The graph $H$}
	\label{fig_not8}
\end{figure}

  Let $H$ be the graph in Figure \ref{fig_not8} and 
 let $L$ be the list assignment of $H$  defined as follows:
 \begin{itemize}
 	\item $L(x)=\{a\}$, $L(y)=\{b\}$,
 	\item $L(u_i)=L(v_i)=\{a,b,1,2,3,4,5,6\}$ for $i=1,2$,
 	\item $L(s_1)=L(t_1)=\{a,1,2,3\}$, $L(w_1)=\{a,4,5,6\}$,
 	\item $L(s_2)=L(t_2)=L(s_3)=L(t_3)=\{1,2,3\}$, $L(w_2)=L(s_6)=L(t_6)=\{4,5,6\}$,
 	\item $L(s_4)=L(t_4)=\{b,1,2,3\}$, $L(s_7)=L(t_7)=\{b,4,5,6\}$,
 	\item $L(s_5)=L(t_5)=\{b,1,2,3,7\}$, $L(s_8)=L(t_8)=\{b,4,5,6,7\}$.
 \end{itemize}

First we show that $H$ is not $L$-colourable. Assume to the contrary that there is an $L$-colouring $\phi$ of $H$.

\begin{clm}
	$\phi(u_1)\in\{1,2,3\}$ or $\phi(v_1)\in\{1,2,3\}$.
\end{clm}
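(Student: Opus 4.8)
The plan is to argue by contradiction: assume $\phi(u_1)\notin\{1,2,3\}$ and $\phi(v_1)\notin\{1,2,3\}$, and show that then $\phi$ cannot be a proper $L$-colouring of the part of $H$ drawn around $u_1$ and $v_1$ in Figure~\ref{fig_not8}.

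The first step is to record the colours that are already pinned down. Since $|L(x)|=|L(y)|=1$ we have $\phi(x)=a$ and $\phi(y)=b$. Since $L(u_1)=L(v_1)=\{a,b,1,2,3,4,5,6\}$ contains no $7$, the hypothesis is equivalent to
\[
\phi(u_1),\ \phi(v_1)\in\{a,b,4,5,6\}.
\]
Thus neither of $u_1,v_1$ forbids any of the colours $1,2,3$ on its neighbours, and the only way either can interact with a vertex whose list is contained in $\{a,1,2,3\}$ — namely $s_1,t_1$ (list $\{a,1,2,3\}$) and $s_2,t_2,s_3,t_3$ (list $\{1,2,3\}$) — is by carrying the colour $a$; the only way either can interact with $w_1$ (list $\{a,4,5,6\}$) or $w_2$ (list $\{4,5,6\}$) is by carrying a colour of $\{a,4,5,6\}$.

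The core of the argument is to propagate these forced colours through the gadget on $\{x,s_1,t_1,s_2,t_2,s_3,t_3\}$ (and, if needed, $w_1,w_2$), using the edges to $u_1,v_1$ shown in the figure. The leverage is that each of these vertices has a list whose size equals its degree, hence is \emph{critical}: a degree-$3$ vertex with list $\{1,2,3\}$ fails precisely when its three neighbours already carry $1,2,3$, and a degree-$4$ vertex with list $\{a,1,2,3\}$ fails precisely when its neighbours carry $a,1,2,3$. Using that $\phi(x)=a$ forces the neighbour of $x$ among $s_1,t_1$ into $\{1,2,3\}$, I would split into a few cases according to the pair $(\phi(u_1),\phi(v_1))\in\{a,b,4,5,6\}^2$ (after the symmetry permuting $\{1,2,3\}$ and swapping $u_1\leftrightarrow v_1$, only a couple of genuinely distinct cases survive: either $a\notin\{\phi(u_1),\phi(v_1)\}$, in which case the whole $\{1,2,3\}$-gadget must be coloured from $\{1,2,3\}$ with no help at all from $u_1,v_1$; or $a$ is used, pinning one further colour). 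In each case the adjacencies — in particular the triangle/odd-cycle structure on the $s_i$'s and $t_i$'s, together with the forced value $\phi(x)=a$ — force some critical vertex to see its entire list on its neighbourhood, which is the contradiction.

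The main obstacle is not conceptual but book-keeping: one must read off from Figure~\ref{fig_not8} the exact adjacencies among $x,u_1,v_1,s_1,t_1,s_2,t_2,s_3,t_3$ (which of $s_1,t_1$ meets $x$; how $u_1,v_1$ sit relative to the cycles on the $s_i$'s and $t_i$'s; whether $w_1,w_2$ are incident to $u_1,v_1$), and then organise the case analysis so that the hypothesis ``$u_1,v_1$ both avoid $\{1,2,3\}$'' genuinely removes every useful constraint from the gadget except the one coming from $\phi(x)=a$. Once the figure is fixed, I expect the proof to collapse, up to the obvious symmetries, to essentially one configuration and to be only a few lines.
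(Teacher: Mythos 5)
Your proposal is a plan rather than a proof, and the plan is aimed at the wrong part of the gadget. The paper's argument for this claim uses only the four vertices $u_1,v_1,w_1,w_2$: in Figure~\ref{fig_not8} they induce a $K_4$, and $u_1,v_1,w_1$ are adjacent to $x$ while $u_1,v_1$ are also adjacent to $y$. Hence $\phi(u_1),\phi(v_1)\notin\{a,b\}$ and $\phi(w_1)\ne a$, so under your contradiction hypothesis all four colours $\phi(u_1),\phi(v_1),\phi(w_1),\phi(w_2)$ lie in the $3$-element set $\{4,5,6\}$, which is impossible on a $K_4$. Your reduction stops at ``$\phi(u_1),\phi(v_1)\in\{a,b,4,5,6\}$'' precisely because you did not use the adjacencies to $x$ and $y$, and you then relegate $w_1,w_2$ to an ``if needed'' aside while directing the main effort at the $\{1,2,3\}$-gadget on $s_1,t_1,\dots,s_3,t_3$.

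That is the genuine gap: the $\{1,2,3\}$-gadget cannot yield the contradiction you want. Nothing in the hypothesis ``$\phi(u_1),\phi(v_1)\notin\{1,2,3\}$'' forces any vertex with list $\{1,2,3\}$ or $\{a,1,2,3\}$ to see its whole list on its neighbourhood; those subgraphs ($H_2,H_3,H_4$ in the paper) are tight $K_4$'s only \emph{after} one of $u_1,v_1$ has been placed in $\{1,2,3\}$, and they are used in the subsequent steps of the argument, not in this claim. So even after all the book-keeping you defer to the figure, the case ``$a\notin\{\phi(u_1),\phi(v_1)\}$, whole $\{1,2,3\}$-gadget coloured from $\{1,2,3\}$'' would simply succeed and produce no contradiction. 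The missing idea is the induced $K_4$ on $\{u_1,v_1,w_1,w_2\}$ together with the edges from $u_1,v_1,w_1$ to $x$ and from $u_1,v_1$ to $y$.
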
 
\begin{proof}
	Let $H_1=H[\{u_1,v_1,w_1,w_2\}]$. 
	Then $\phi$ is an $L_1$-colouring of $H_1$ where $L_1(u_1)=L_1(v_1)=\{1,2,3,4,5,6\}$ and $L_1(w_1)=L_1(w_2)=\{4,5,6\}$. Assume $\phi(u_1)\notin\{1,2,3\}$ and $\phi(v_1)\notin\{1,2,3\}$. Then $\{\phi(u_1),\phi(v_1),\phi(w_1),\phi(w_2)\}\subseteq\{4,5,6\}$. But $\{u_1,v_1,w_1,w_2\}$ induces a copy of $K_4$, a contradiction.
\end{proof}

By symmetry, we assume that $\phi(u_1)\in\{1,2,3\}$. 

\begin{obs}
	\label{obsK4}
   Assume $K$ is a copy of $K_4$ with vertex set $\{x_1,x_2,x_3,x_4\}$, and $L'$ is a list-assignment of $K$ such that $L'(x_1)=L'(x_2)=L'(x_3)=\{a_1,a_2,a_3\}$ and $L'(x_4)=\{a_1,a_2,a_3\}\cup B$ (where $B$ is a non-empty colour set and $B\cap\{a_1,a_2,a_3\}=\emptyset$). Then in each $L'$-colouring $\varphi$ of $K$, $\varphi(x_4)\in B$.
\end{obs}

Let $H_2=H[u_1,u_2,s_1,s_2]$. Then $\phi$ is an $L_2$-colouring of $H_2$ where $L_2(u_1)=L_2(s_1)=L_2(s_2)=\{1,2,3\}$ and $L_2(u_2)=\{1,2,3,4,5,6\}$. By Observation \ref{obsK4}, $\phi(u_2)\in\{4,5,6\}$.

Let $H_3=H[u_1,s_3,s_4,s_5]$. Then $\phi$ is an $L_3$-colouring of $H_3$ where $L_3(u_1)=L_3(s_3)=L_3(s_4)=\{1,2,3\}$ and $L_3(s_5)=\{1,2,3,7\}$. By Observation \ref{obsK4}, $\phi(s_5)=7$. 

Similarly, let $H_4=H[u_2,s_6,s_7,s_8]$. Then $\phi$ is an $L_4$-colouring of $H_4$ where $L_4(u_2)=L_4(s_6)=L_4(s_7)=\{4,5,6\}$ and $L_4(s_8)=\{4,5,6,7\}$. By Observation \ref{obsK4}, $\phi(s_8)=7$.

Thus $\phi(s_5)=\phi(s_8)=7$, but $s_5s_8$ is an edge in $H$, a contradiction. Hence $H$ is not $L$-colourable.

Let $G$ be a graph obtained from the disjoint union of 56 copies  $H_i$ of $H$ by identifying all copies of $x$ into a single vertex (also named as $x$) and all the copies of $y$ into a single vertex (also named as $y$), and adding the edges $v_2^{(i)}u_2^{(i+1)}$ (where $v_2^{(i)}$ and $u_2^{(i)}$ are the copies of $u_2$ and $v_2$ in $H_i$) for $i=1,2,\dots,55$, and adding an edge connecting $x$ and $y$. Then $G$ is a non-complete $3$-connected planar graph.

Let $L(x)=L(y)=\{a,b,c,d,e,f,g,h\}$. There are 56 possible $L$-colourings $\phi$ of $x$ and $y$. Each such  colouring $\phi$ corresponds to one copy of $H$. We define the list assignment of the corresponding copy of $H$ as $L$ by replacing $a$ with $\phi(x)$ and replacing $b$ with $\phi(y)$. It is easy to verify that $L(v)=\min\{d(v),8\}$ for any $v\in V(G)$. As every possible $L$-colouring of $x$ and $y$ cannot be extended to an $L$-colouring of some copy of $H$, we conclude that $G$ is not $L$-colourable. Hence $G$ is not degree-truncated 8-choosable.

\section{Two  preliminary lemmas} 
The remainder of this paper is devoted to the proof of Theorem \ref{thm-main}.
In this section, we present two preliminary lemmas. 

 If $G$ is a Gallai-tree, and $L$ is a list assignment of $G$ with $|L(v)| \ge d_G(v)$ and $G$ is not $L$-colourable, then we say $L$ is a {\em bad list assignment} for $G$. The following was proved in \cite{ERT}.

\begin{lem} 
\label{lem-Gallai} 
If $G$ is a   Gallai-tree, and $L$ is a bad list assignment for $G$, then 
   for each block $B$ of $G$ that is $r$-regular, there is a set $C_B$ of $r$ colours such that (i) if $B$ and $B'$ share a vertex, then $C_B \cap C_{B'} = \emptyset$, and (ii)  for each vertex $v$, $L(v)= \cup_{v \in B}C_B$. 
\end{lem}

Note that each block of a Gallai-tree is a regular graph. It follows from Lemma \ref{lem-Gallai} that for a bad list assignment $L$ of a Gallai-tree $G$,   $|L(v)| =  d_{G}(v)$ for each vertex $v \in V(G)$. One property of a bad list assignment $L$ we shall use frequently is that if two vertices $u$ and $v$ are  in a same block   of $G$ and none of them is a   cut-vertex  of $G$, then $L(u)=L(v)$.

Assume $G$ is a simple plane graph. We denote by $F(G)$ the set of faces of $G$. 
For each face $\theta$ of $G$, let $V(\theta)$ be the set of vertices on the boundary of $\theta$ and $B(\theta)$ be the subgraph of $G$ induced by the boundary edges  of $\theta$. Let ${\rm int}(\theta)$ be the set of vertices of $G$ contained in $\theta$ (not including vertices on the boundary of $\theta$). Note that $G$ may be disconnected. Hence $B(\theta)$ also may be disconnected.

Let $\Theta(G)$ be the bipartite graph with partite sets $V(G)$ and $F(G)$, where $v\theta$ is an edge in $\Theta(G)$ if and only if $v \in V(\theta)$.

Let $\theta^*$ be the infinite face of $G$, and $v^*$ be a vertex on the boundary of $\theta^*$. A spanning subgraph $F$ of $\Theta(G)$ is called  {\em very nice} (with respect to $(\theta^*, v^*)$) if the following hold:

\begin{itemize}
  \item $d_F(v^*)=1$ and $d_F(v) \leq 2$ for  $v \in V(G) - \{v^*\}$.
  \item $d_F(\theta^*) = d_{\Theta(G)}(\theta^*)$ and 
  $d_F(\theta) = d_{\Theta(G)}(\theta)-2$ for each finite face $\theta$ of $G$.
    \item If $v_1, v_2 \in V(\theta)-N_F(\theta)$, then $v_1$ and $v_2$ are contained in a cycle $C$ of $V(\theta)$.
\end{itemize}

The following lemma was proved in \cite{ZZZ}.

\begin{lem}
    \label{lem-verynice}
    For any plane graph $G$, $\Theta(G)$ has a very nice subgraphs.
\end{lem}

If $G$ is 2-connected, then Lemma \ref{lem-verynice} follows from the result that $G$ has a \emph{ bipolar orientation}  (cf. \cite{RT1986}), which is an acyclic orientation of $G$ for which the following hold:
\begin{itemize}
    \item There is a unique source $s$, and a  unique sink $t$, and $s,t$ are two  arbitrary chosen vertices on the boundary of the infinite face of $G$. 
    \item For each face $\theta$, its boundary $B(\theta)$ consists of two directed paths (and hence the restriction of the orientation to $B(\theta)$  also has a unique source and a unique sink.
    \item For each vertex $v$, the in-edges (and hence the out-edges) are consecutive in the cyclic order of edges around $v$.
\end{itemize}

If $G$ is 2-connected and has a bipolar orientation, then let $F$   be the spanning subgraph of $\Theta(G)$ in which $v\theta$ is an edge if   $v\theta \in E(\Theta(G))$ and either $v$ is neither a source nor a sink of $B(\theta)$, or $\theta$ is the infinite face of $G$. Then $F$ is a very nice subgraph of $\Theta(G)$. 
 
 In case $G$ is connected but not 2-connected, we can construct a very nice subgraph of $\Theta(G)$ as follows: 

 Let $T$ be the tree whose vertices are blocks of $G$ and cut vertices of $G$, in which $vB$ is an edge if $B$ is block containing   $v$ (which is a cut vertex of $G$). Choose one block whose infinite face containing the infinite face of $G$ as the root of $T$, so that $T$ is a rooted tree. Each block $B$ other than the root block has a  unique father which is a cut vertex $v_B$ contained in $B$. By the choice of  the root block, we know that $v_B$ lies on the boundary of the infinite face $\theta_B$ of $B$.   
 Construct a very nice subgraph $F_B$ of $\Theta(B)$ with respect to $(\theta_B, v_B)$. Let $F$ be the union of the $F_B$'s for all the blocks $B$ of $G$. Then $F$ is a very nice subgraph of $\Theta(G)$.  

Note that when taking the union of the $  F_B$'s, faces of distinct blocks may correspond to a single face of $G$, and hence the vertices representing   faces 
of these distinct blocks are identified into a single vertex that represents a single  face of $G$. Note that in this case, at most one of these faces of the distinct blocks is a finite face of that block. Similarly, cut vertices from distinct blocks may correspond to a single cut vertex of $G$, and they are identified into a single vertex when taking the union. 

If $G$ is disconnected, then the union of very nice subgraphs of $\Theta(G_i)$ for the connected components $G_i$ of $G$ is a very nice subgraph of $\Theta(G)$.

 \section{A more technical statement}

We shall prove Theorem \ref{thm-main} by induction on the number of vertices of $G$. 
For that purpose, we need to conclude by induction hypothesis that certain induced subgraphs of $G$ can be properly coloured. However,   Theorem \ref{thm-main} refers to 3-connected planar graphs, and  the induced subgraphs we would like to colour by induction hypothesis may not be 3-connected. To overcome this difficulty, instead of Theorem \ref{thm-main}, we shall prove a stronger and more technical statement by induction.

Assume $G$ is a plane graph, $V_1, V_2$ is a partition of $V(G)$ such that each face of $G[V_2]$ contains at most one connected component of $G[V_1]$.

\begin{definition}
    For a connected component $Q$ of $G[V_1]$, let $\theta_Q$ be the face of $G[V_2]$ that contains $Q$.
\end{definition}
 As $G[V_2]$ may have more than one connected components, the subgraph of $G[V_2]$ induced by $V(\theta_Q)$   
could be disconnected.

Assume $C$ is a cycle in $G $.   We denote by ${\rm int}(C)$ (respectively, ${\rm ext}(C)$) the set vertices of $G$ contained in   the finite region  (respectively, the infinite region) with boundary $C$. Note that $V(C) \cup  {\rm int}(C)  \cup {\rm ext}(C)$ is a partition of $V(G)$.

\begin{definition}
  We say a connected component $Q$ of $G[V_1]$  is {\em properly connected to $V_2$} if the following hold:
\begin{enumerate}
    \item[(P1)] Every vertex in $V({\theta}_Q)$ is adjacent to some vertex of $Q$.
    \item[(P2)]  If $C$ is a cycle in $G[V(\theta_Q)]$ and $V(Q) \subseteq {\rm int}(C)$,   then for any vertex $v$ of $Q$, $G$ has three   paths contained in $V(C) \cup {\rm int}(C)$ connecting $V(C)$ and $v$, and these paths are vertex disjoint, except that they  share the same end vertex $v$. 
   \end{enumerate}
\end{definition}

 Instead of Theorem \ref{thm-main}, we shall prove the following more technical statement.
 
\begin{theorem}
    \label{thm-main2}
    Assume $G$ is a connected plane graph, and  $V_1, V_2$ is a partition of $V(G)$,  where $V_2 \ne \emptyset$ and each face of $G[V_2]$ contains at most one connected component of $G[V_1]$.   Assume   each connected component $Q$ of $G[V_1]$ is properly connected to $V_2$.  Assume $\theta^*$ is the infinite face of $G[V_2]$ and $v^* \in V(\theta^*)$. If $f: V(G) \to \mathbb{N}$ is defined as follows:
    \[
    f(v) = \begin{cases} 
    1, &\text{ if $v = v^*$ and $v^*$ is not an isolated vertex in $G[V_2]$}, \cr
    12, &\text{ if $v \in V_2 -\{v^*\}$ or $v=v^*$ is an isolated vertex in $G[V_2]$}, \cr 
    d_G(v), &\text{ if $v \in V_1$}. 
    \end{cases} 
    \]
    Then $G$ is $f$-choosable.
\end{theorem}

{\bf Proof of Theorem \ref{thm-main} (using Theorem \ref{thm-main2})} 
Assume $G$ is a counterexample to Theorem \ref{thm-main} with the minimum number of vertices and subject to this, with the maximum number of edges.   Let $V_1 = \{v \in V(G):d_G(v) \le 11\}$ and $V_2 = V(G)-V_1$.  Let $L$ be  a list assignment of $G$ such that $|L(v)| =d_G(v)$ for $v \in V_1$ and $|L(v)|=12$ for $v \in V_2$, and $G$ is not $L$-colourable. Since $G$ is 3-connected and not a complete graph, $G$ is not a Gallai-tree and hence $G$ is degree-choosable. Therefore, $V_2 \ne \emptyset$. 

 Two vertices $x$ and $y$ of $G$ are called {\em visible} to each other if $x$ and $y$ lie on the boundary of a same face of $G$, i.e., $G+xy$ is still a planar graph.

\begin{clm}\label{clm-visible}
    Any two vertices in $V_2$  visible to each other are adjacent. Consequently,   the following hold:
\begin{itemize}
    \item Each face of $G[V_2]$ contains at most one connected component of $G[V_1]$.
    \item For each connected component $Q$ of $G[V_1]$, each vertex in $V(\theta_Q)$ is adjacent to some vertex of $Q$.
\end{itemize}
\end{clm}
\begin{proof}
    If   $x,y \in V_2$ are visible to each other but not adjacent, then   $G+xy$ would not be $L$-colourable, and hence is a counterexample with more edges, contrary to the choice of $G$. 

Assume a face $\theta$ of $G[V_2]$ contains two connected components of $G[V_1]$. Since $G$ is 3-connected, each connected component of $G[V_1]$ is adjacent to at least 3 vertices of $V(\theta)$, we conclude that $V(\theta)$  has two non-adjacent vertices visible to each other, a contradiction. 

If $V(\theta_Q)$ has a vertex $v$ not   adjacent to any  vertex of $Q$, then the two neighbours of $v$ in $V(\theta)$ are visible to each other and no adjacent, a contradiction.
\end{proof}

Assume $Q$ is a connected component of $G[V_1]$, $v \in V(Q)$. By Claim \ref{clm-visible}, each vertex in $V(\theta_Q)$ is adjacent to some vertex of $Q$,  i.e., (P1) is satisfied.

Assume   $C$ is a cycle in $G[V(\theta_Q)]$ and $Q$ is contained in ${\rm int}(C)$. 
  Let $u$ be a vertex of $G$ that lies in ${\rm ext}(C)$ (or add such a vertex $u$ adjacent to all vertices in $C$ if ${\rm ext}(C) = \emptyset$). Since $G$ is 3-connected (if  ${\rm ext}(C) = \emptyset$, and    a vertex $u$ adjacent to all vertices of $C$ is added, then the resulting graph is also 3-connected), there are three internally vertex disjoint paths connecting $u$ and $v$. Each  of these paths  has a subpath   contained in  $V(C ) \cup {\rm int}(C)$ that connects $V(C)$ and $v$. These paths   are vertex disjoint,   except that they  have a common end vertex $v$.  Hence, (P2) is satisfied and  
$Q$ is properly connected to $V_2$.  

Let $f \in \mathbb{N}^G$ be defined as in Theorem 
\ref{thm-main2}. As $L$ is an $f$-list assignment of $G$, it follows from  Theorem \ref{thm-main2} that $G$ is $L$-colourable, a contradiction.

\section{Proof of   Theorem \ref{thm-main2}} 
Assume Theorem \ref{thm-main2} is not true, and $G$ is a counterexample with the minimum number of vertices. Let $f$ be the function defined in Theorem \ref{thm-main2}, $L$ be an $f$-list assignment of $G$ such that $G$ is not $L$-colourable. 
 
For  $X, Y \subseteq V(G)$, let $$N_Y(X)= N_G(X) \cap Y.$$

\begin{definition}
    Assume $X$ is a subset of $V(G)$ and $\phi$ is an $L$-colouring of $G[X]$. Let $L^\phi$ be the list assignment of $G-X$ defined as $$L^\phi(v)=L(v)-\{\phi(u):u\in N_X(v)\}$$ for each vertex $v\in V(G)-X$.
\end{definition}

\begin{definition}
    \label{def-ext}
    Assume $X \subseteq Y$ are subsets of $V(G)$, $\phi$ is an $L$-colouring of $G[X]$, and $\psi$ is an $L$-colouring of $G[Y]$. If $\psi(v) = \phi(v)$ for each vertex $v \in X$, then we say $\psi$ is an  extension of $\phi$ to $Y$.
\end{definition}

\begin{definition}
    \label{def-leafblock}
    A \textit{leaf block} of a connected graph $Q$ is a block $B$ of $Q$ that contains at most one cut-vertex of $Q$. If $B$ contains one cut-vertex $v$ of $Q$, then $v$ is called the {\em root} of $B$. The other vertices of $B$ are called {\em non-root}  vertices. If $Q$ is 2-connected, then  $Q$ itself is called a \textit{leaf block} and all vertices of $Q$ are non-root vertices. For a leaf-block $B$ of $Q$, we denote by $U_B$ the set of non-root vertices of $B$.
\end{definition}

\begin{lem}
\label{lem-connected}
Either $|V_2| =1$ or $G[V_2]$ is 2-connected.
\end{lem}
\begin{proof}
 Assume $|V_2| \ge 2$ and $G[V_2]$ is not 2-connected.

 \medskip
 \noindent
 {\bf Case 1}  There is a connected component ${\tilde{Q}}$ of $G[V_1]$ such that $V(\theta_{\tilde{Q}})$ contains a cycle $C$ for which ${\rm int}(C) \cap V_2 \ne \emptyset$. 
 
We choose such a   cycle $C$ so that $|{\rm int}(C)|$ is maximum.
  
 Let $G'=G-{\rm int} (C)$. 
 For $i=1,2$, let $V'_i= V_i \cap V(G')$. It follows from the construction that each face of $G'[V'_2]$ contains at most one connected component of $G'[V'_1]$.
  
 Now we show that each connected component $Q$ of $G'[V'_1]$ is properly connected to $V'_2$. 
 
Let $Q$ be a connected component   of $G'[V'_1]$. Then $Q$  is also a connected component of $G[V_1]$. Recall that $\theta_Q$ is the face of $G[V_2]$ containing $Q$. Let $\theta'_Q$ be the face of $G'[V'_2]$ containing $Q$. Then   $V(\theta'_Q) =V(\theta_Q) \cap V'_2  \subseteq  V(\theta_Q)$ and hence every vertex of $V(\theta'_Q)$ is adjacent to some vertex of $Q$, i.e., (P1) is satisfied.   

Next we show that (P2) is also satisfied by $G'$ and $Q$. 
  Let $C'$ be a cycle in $G'[V(\theta'_Q)]$ such that $Q$ is contained in ${\rm int}(C')$. Then $C'$ is also a cycle in $G[V(\theta_Q)]$. By our assumption,    for each vertex $v$ of $Q$, $G$ has three   paths, say $P_1,P_2,P_3$,  contained in $V(C') \cup {\rm int}(C')$ connecting $V(C')$ and $v$, and these paths are vertex disjoint, except that they  share the same end vertex $v$. 

If ${\rm int}(C')$ contains some vertices of ${\rm int} (C)$, then as ${\rm int} (C) $ is connected,  ${\rm int} (C) \subseteq {\rm int}(C')$. But this contradicts the choice of $C$. So ${\rm int}(C')$ contains no vertex of ${\rm int} (C)$. 
Hence  the three paths in $G$ connecting $V(C')$ to $v$ are also paths in $G'$, and $Q$ is properly connected to $G'[V'_2]$. 
 
 By the minimality of $G$, there is an $L$-colouring $\phi$ of $G'$. 

Let $H=G[{\rm int}(C)]$ and for $i=1,2$, let $V''_i = V_i \cap V(H)$. Then $V''_1 $ and $V''_2$ form a partition of $V(H)$, and $V''_2 \ne \emptyset$. It is follows from the construction that each face of $H[V''_2]$ contains at most one connected component of $H[V''_1]$.

If $V_2 \cap {\rm int}(C)$ is disconnected to $V(C)$ in $G[V_2]$, then let $u^*$ be an arbitrary vertex on the boundary of the infinite face of $H[V''_2]$. 
Otherwise,  since $C$ is a cycle contained in $V(\theta_{\tilde{Q}})$, there is a cut-vertex $u$ of $G[V_2]$ contained in $V(C)$, and $V_2 \cap {\rm int}(C)$ is disconnected to $V(C) - \{u\}$ in $G[V_2]$. In this case, let $u^*=u$. Note that in the latter case, $u^*$ is not an isolated vertex in $H[V''_2]$. 

Note that $|L^{\phi}(v)| =12$ for $v \in V''_2$, except that in the latter case above, $|L^{\phi}(u^*)| =1$. We shall show that every connected component $Q$ of $H[V''_1]$ is properly connected to $V''_2$ in $H$, and hence $H$ has an $L^{\phi}$-colouring $\psi$. The union $\phi \cup \psi$ is an $L$-colouring of $G$, which is  a contradiction.

Assume $Q$ is a connected component of $H[V''_1]$. Then it is also a connected component of $G[V_1]$. Recall that $\theta_Q$ is the face of $G[V_2]$ containing $Q$. Let $\theta'_Q$ be the face of $H[V''_2]$ containing $Q$. Then $V(\theta'_Q) = V(\theta_Q) \cap V''_2$. Hence every vertex of $V(\theta'_Q)$ is adjacent to some vertex of $Q$. 

Assume $C'$ is a cycle in $H[V(\theta'_Q)]$ and $Q \subseteq {\rm int}(C')$. Since  $G-V(H)$   is contained in ${\rm ext}(C')$,   we conclude that   for any vertex $v$ of $Q$, there are three paths in $H[ V(C') \cup {\rm int}(C')]$ connecting $V(C')$ to $v$, and the paths are vertex disjoint, except that they have the same end vertex $v$. So (P2) is satisfied by $H$ and $Q$. Therefore 
$H$ has a $L^{\phi}$-colouring $\psi$. The union of $\phi$ and $\psi$ is an $L$-colouring of $G$. This completes the proof of Case 1.

\medskip
 \noindent
 {\bf Case 2}  For any connected component $Q$ of $G[V_1]$, for any cycle  $C$ contained in  $V(\theta_Q)$,   ${\rm int}(C) \cap V_2 = \emptyset$.

Let $B$ be a connected component of $G[V_2]$   not containing $v^*$, or a leaf-block of $G[V_2]$ such that $v^* \notin U_B$. 

Let $\tau$ be the face of $G[V_2] - B$  containing $B$ (or the face of $G[V_2]  - U_B$  containing $U_B$,   if $B$ is leaf-block of $G[V_2]$). Let   $G'=G- {\rm int}(\tau)$. For $i=1,2$, let $V'_i = V_i \cap V(G')$. It is obvious that each face of  $G'[V'_2]$ contains at most one connected component of $G'[V'_1]$.

Assume $Q$ is a  connected component of $G'[V'_1]$. Then $Q$ is a connected component of $G[V_1]$. Recall that $\theta_Q$ is the face of $G[V_2]$ containing $Q$. Let $\theta'_Q$ be the face of $G'[V'_2]$ containing $Q$. As $V(\theta'_Q) \subseteq V(\theta_Q)$, we know that every vertex of $V(\theta'_Q)$ is adjacent to some vertex of $Q$. Assume $C$ is a cycle in $\theta'_Q$ such that $Q \subseteq {\rm int}(C)$. Then  $C \subseteq V(\theta_Q)$.  As $Q$ is properly connected to $V_2$ in $G$, for any vertex $v \in V(Q)$, there are three paths   in $V(C) \cup {\rm int}(C)$ connecting $V(C)$ to $v$, and these paths are vertex disjoint, except they have the same end vertex $v$.  By our assumption, 
${\rm int}(C) \cap V_2 = \emptyset$. Hence ${\rm int}(C) \cap {\rm int}(\tau) = \emptyset$. So these three paths are also paths in $G'$, and  $Q$   is properly connected to $V'_2$. By the minimality of $G$,   there is a proper $L$-colouring $\phi$ of $G'$. 

Let $H= G[{\rm int}(\tau)] = G - V(G')$. For $i=1,2$, let $V''_i=V_i \cap V(H)$. If $B$ is a leaf-block of $G[V_2]$, then let $u^*$ be the cut-vertex of $G[V_2]$ contained in $B$. Otherwise, let $u^*$ be an arbitrary vertex on the boundary of the infinite face of $H$. Then 
$|L^{\phi}(v)| =12$ for each vertex of $V(H[V''_2]) $, except that if $B$ is a leaf-block of $G[V_2]$, then 
 $|L^{\phi}(u^*)| =1$. Note that if $|L^{\phi}(u^*)| =1$, then  $u^*$ is not an isolated vertex in $H[V''_2]$. 
 
 It is obvious that each face of $H[V''_2]$ contains at most one connected component of $H[V''_1]$.   It is also easy to see that every connected component $Q$ of $H[V''_1]$ is properly connected to $V''_2$ in $H$. Hence $H$ has an $L^{\phi}$-colouring $\psi$. The union $\phi \cup \psi$ is an $L$-colouring of $G$, which is  a contradiction. This completes the proof of Case 2.
\end{proof}

\begin{lem}
    \label{lem-blocksofQ}
    Each connected component $Q$ of $G[V_1]$ is a Gallai-tree. Moreover,  the following hold:
    \begin{itemize}
        \item Each block of $Q$ is either $K_n$ for $n \le 3$ or is an odd cycle.
        \item For each non-cut vertex $v$ of $Q$, $d_Q(v) \le 2$.  If $\theta_Q$ is a finite face of $G[V_2]$, then $v$ is adjacent to some vertex in $V_2$.
    \end{itemize}   
\end{lem}
\begin{proof}
    If a connected component $Q$ of $G[V_1]$ is not a Gallai-tree, then by the minimality of $G$, $G-Q$ has a proper $L$-colouring $\phi$ (it is obvious that every connected component of $G[V_1]-Q$ is properly connected to $V_2$). As $|L^{\phi}(v)| \ge d_Q(v)$ for each vertex $v$ of $Q$ and $Q$ is not a Gallai-tree,  $Q$ admits a proper $L^{\phi}$-colouring $\psi$. The union $\phi \cup \psi$ is a proper $L$-colouring of $G$, a contradiction.

    Assume $Q$ is a connected component of $G[V_1]$ and $B$ is a block of $Q$. As $Q$ is a Gallai-tree, $B$ is either a complete graph or an odd cycle. If $B$ is neither an odd cycle nor $K_n$ for $n \le 3$, then $B$ is a copy of $K_4$. Then $B$ contains a triangle $T$ so that one vertex $v$ of $K_4$ is contained in the interior of $T$, and all the vertices in the interior of $T$ are not adjacent to $V_2$. Let $X$ be the set of vertices contained in  the interior of $T$, and let  
$\phi$ be a proper $L$-colouring of $X$ (which exists because $|L(v)| > d_{G[X]}(v)$ and $|L(x)| > d_{G[X]}(x)$ for every other vertex $x \in X$).
Then \[
    |L^{\phi}(v)|  \ge  \begin{cases} 
    1, &\text{ if $v=v^* $}, \cr
    12, &\text{ if $v\in V_2 $}, \cr 
    d_{G-X}(v), &\text{ if $v\in V_1-X$}, 
    \end{cases} 
    \]
   It is easy to see that   $Q-X$   is properly connected to $V_2$.  By the minimality of $G$, $G-X$ has a proper $L$-colouring $\psi$ and the union $\phi \cup \psi$ is a proper $L$-colouring of $G$, a contradiction.

    Assume $v \in Q$ is a non-cut vertex of $Q$. Since each block of $Q$ is an odd cycle or a complete graph of order at most $3$,   $d_Q(v) \le 2$. Since $Q$ is properly connected to $V_2$, $d_G(v) \ge 3$. Hence $v$ is adjacent to at least one vertex in $V_2$. 
\end{proof}

\begin{cor}
 \label{cor-C=theta}
Assume $Q$ is a connected component of $G[V_1]$. If $\theta_Q$ is a finite face of $G[V_2]$, then $\theta_Q=C_Q$ is a cycle, and for each leaf-block $B$ of $Q$, there are at least two vertices in $C_Q$ adjacent to $U_B$.  
\end{cor}
\begin{proof}
  Since $G[V_2]$ is 2-connected, the boundary of each face of $G[V_2]$ is a cycle. Thus, $\theta_Q=C_Q$.  Let $B$ be a leaf-block of $Q$ and $v \in U_B$. As   $\theta_Q$ is a finite face of $G[V_2]$ and $Q$ is properly connected to $V_2$, there exist three paths connecting $C_Q$ to $v$, and  these paths are pairwise vertex disjoint except that they share the same end vertex $v$. At most one of these paths contains the root vertex of $B$. Each of the other two paths contains an edge between $C_Q$ and $U_B$.  Therefore there are at least two vertices in $C_Q$ adjacent to $U_B$.  
\end{proof}

\begin{definition}
    \label{def-valid}
    A subset $X$ of $V(G)$ is {\em valid} if for each connected component $Q$ of $G[V_1]$, $Q-X$ is connected, and for each vertex $v \in X \cap V(Q)$, $N_{V_2}(v)   \subseteq X$.
\end{definition}

In the following proof process, we shall colour vertices of $G$ one by one, and the set $X$ of coloured vertices will always be a {\em valid} subset of $V(G)$. In other words, when we colour a vertex $v$ of a connected component $Q$ of $G[V_1]$, and $X$ is the set of already coloured vertices, then $v$ is not a cut-vertex of $Q-X$ and $v$ is not adjacent to any uncoloured vertex of $V_2$.

\begin{obs}
\label{obs-1}
    By following the above rules, if $\phi$ is an $L$-colouring of $G[X]$, then for any connected component $Q$ of $G[V_1]$, the following conditions hold: 
 \begin{enumerate}
     \item $Q-X$ is a connected component of $G[V_1-X]$.
    % \item $Q-X$ is a Gallai-tree.
     \item For any vertex $v \in V(Q)-X$, $|L^{\phi}(v)| \ge d_{G-X}(v)$.
     \item  For any vertex $v \in V_2-X$,  $|L^{\phi}(v)| \ge 12- |N_{V_2}(v) \cap X |$.
 \end{enumerate}  
\end{obs}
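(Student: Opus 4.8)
The plan is to verify the three items directly, using only that $X$ is a valid subset of $V(G)$ (Definition~\ref{def-valid}), that $\phi$ is an $L$-colouring of $G[X]$, and the definitions of $L^{\phi}$ and of $f$. There is no real obstacle here; the only care needed is to extract from the notion of validity exactly the adjacency information each item requires --- the first clause of Definition~\ref{def-valid} for item (1), the second clause for item (3), and nothing beyond the $f$-value for item (2).

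For (1), assume $V(Q)-X\neq\emptyset$ (otherwise the statement is vacuous). Validity tells us $Q-X$ is connected. Moreover $Q-X$ carries no edge of $G[V_1-X]$ to the rest of $V_1-X$: if $u\in V(Q)-X$, $v\in V_1-X$ and $uv\in E(G)$, then $v$ lies in the same connected component of $G[V_1]$ as $u$, namely $Q$, so $v\in V(Q)-X$. A nonempty connected vertex set with no outgoing edge in $G[V_1-X]$ is a connected component of $G[V_1-X]$, which is (1).

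For (2), $v\in V(Q)-X$ lies in $V_1$, so $|L(v)|\ge f(v)=d_G(v)$; and $L^{\phi}(v)=L(v)\setminus\{\phi(u):u\in N_X(v)\}$ discards at most $|N_X(v)|$ colours, so $|L^{\phi}(v)|\ge d_G(v)-|N_X(v)|=d_G(v)-|N_G(v)\cap X|=d_{G-X}(v)$ since $v\notin X$. This is pure counting.

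Item (3) is the one that actually uses the structure of a valid set, and if anything is ``the hard part'' it is spotting why. The key sub-claim is that a vertex $v\in V_2-X$ has no coloured neighbour in $V_1$: if $v$ were adjacent to some $w\in V_1\cap X$, then taking the component $Q'$ of $G[V_1]$ with $w\in X\cap V(Q')$, the second clause of Definition~\ref{def-valid} gives $N_{V_2}(w)\subseteq X$, whence $v\in X$, a contradiction. Hence $N_X(v)\subseteq V_2$, i.e.\ $N_X(v)=N_{V_2}(v)\cap X$. Finally, $|L(v)|\ge 12$ for every $v\in V_2-X$ --- the only potential exception is $v=v^{*}$ with $f(v^{*})=1$, which does not occur because in the colouring process $v^{*}$ (when it is not isolated in $G[V_2]$) is coloured at the very start, so that $v^{*}\in X$ throughout --- and therefore $|L^{\phi}(v)|\ge 12-|N_X(v)|=12-|N_{V_2}(v)\cap X|$, which is (3). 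The point worth emphasising is that validity was set up precisely so that colours used on $V_1$ never delete entries from the list of an uncoloured $V_2$-vertex; that is exactly what upgrades the crude bound $12-|N_X(v)|$ to the stated $12-|N_{V_2}(v)\cap X|$.
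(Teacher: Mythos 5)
Your proof is correct and is the natural unwinding of Definition~\ref{def-valid} that the paper leaves implicit: item (1) is the first clause of validity plus the fact that $V_1$-edges from $Q$ stay inside $Q$, item (2) is pure counting from $f(v)=d_G(v)$, and item (3) is exactly the second clause of validity (no coloured $V_1$-vertex can delete a colour from an uncoloured $V_2$-list). One step that deserves a pointer for rigour: the claim that $v^*$ is coloured first rests on Lemma~\ref{lem-blocksofQ}, which guarantees every non-cut vertex of a component of $G[V_1]$ has a $V_2$-neighbour, so (R1) cannot fire while $X=\emptyset$ and (R2) colours $v^*$ at step one; and strictly speaking $v^*\notin X_0=\emptyset$, so ``$v^*\in X$ throughout'' should be read as ``throughout the process after the first step,'' which is the only regime in which item (3) is ever invoked.
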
 

 \begin{definition}
     \label{def-free}
     Assume $\phi$ is an $L$-colouring of $G[X]$, and $Q$ is a connected component of $G[V_1]$. We say $Q$ is {\em free} with respect to $\phi$ if for any extension $\psi$ of $\phi$ to $X \cup V_2$,  $Q-X$ 
 is $L^{\psi}$-colourable.
 \end{definition}

 Given an $L$-colouring $\phi$ of $G[X]$, it is non-trivial to check whether   $Q$   is free with respect to $\phi$. The following are two sufficient conditions for $Q$ to be free with respect to $\phi$.

\begin{lem}
    \label{lem-suff}
    Assume $\phi$ is an $L$-colouring   of $G[X]$ and $Q$ is a connected component of $G[V_1]$. Then $Q$ is free with respect to $\phi$ if one of the following holds:
\begin{enumerate}
    \item[(P1)] There exists a vertex $v \in V(Q)-X$ for which $|L^{\phi }(v)| > d_{G-X}(v)$.
    \item[(P2)] There are two non-cut vertices $u,v$ of a same block of $Q-X$ with $L^{\phi}(u) \ne L^{\phi}(v)$ and $N_{V_2-X}(u)  = N_{V_2-X}(v)$.
\end{enumerate}
\end{lem}
\begin{proof} By Lemma \ref{lem-blocksofQ}, $Q$ is a Gallai-tree, hence $Q-X$ is also a Gallai-tree.

If there exists a vertex $v \in V(Q)-X$ for which $|L^{\phi }(v)| > d_{G-X}(v)$, then for any extension $\psi$ of $\phi$ to $X \cup V_2$, $|L^{\psi }(v)| > d_{Q-X}(v)$, because each time a neighbor of $v$ in $V_2$ is coloured, its degree in the subgraph induced by uncoloured vertices decreases by 1, and its list decreases by at most 1. Hence $L^{\psi}|_{Q-X}$ is not a bad list assignment for $Q-X$, and $Q-X$ is $L^{\psi}$-colourable.

Assume
there are two non-cut vertices $u,v$ of the same block of $Q-X$ with $L^{\phi}(u) \ne L^{\phi}(v)$ and $N_{V_2-X}(u)  = N_{V_2-X}(v)$.
Assume   $\psi$ is an extension of $\phi$ to $X \cup V_2$. For any $w \in V_2$ adjacent to $u$ (hence also adjacent to $v$), either $\psi(w) \in L^{\phi}(u) \cap L^{\phi}(v)$, and or $\psi(w)$ is missing from ether $L^{\phi}(u) $ or $ L^{\phi}(v)$. If the latter occurs, then one of the following holds: \begin{itemize}
     \item  $|L^{\psi }(v)| > d_{Q-X}(v)$.
     \item $|L^{\psi }(u)| > d_{Q-X}(u)$.
  \end{itemize} If the latter case never occur, then  $L^{\psi}(u) \ne L^{\psi}(v)$. Thus, $L^{\psi}|_{Q-X}$ is not a bad list assignment for $Q-X$, and hence $Q-X$ is $L^{\psi}$-colourable.
\end{proof}

Since $G[V_2]$ is a planar graph and every planar graph with more than one vertex has at least two vertices of degree at most $5$, there exists an ordering $<$ of vertices in $V_2$ such that $v^* < v$ for every other vertex $v$ of $V_2$, and each vertex $u\in V_2$ has at most 5 neighbors $v$ with $v<u$.  
  
We colour vertices of $G$ one by one and use the following rules to choose the next vertex to be coloured. For $i=1,2,\ldots,$ let  $X_i$ be the set of the first $i$ coloured vertices, and let $\phi_i$ be the $L$-colouring of $G[X_i]$. Initially, $X_0 = \emptyset$. Assume for $i \ge 0$, $X_i$ and $\phi_i$ are defined. We construct $X_{i+1}$ and an extension $\phi_{i+1}$ of $\phi_i$ to $X_{i+1}$ as follows:  
 
\begin{enumerate}
    \item[(R1)]  If a  non-free connected component $Q$   has a vertex $v \in Q-X_i$ that is not a cut-vertex of $Q-X_i$, and $v$ is not adjacent to any vertex of $V_2-X_i$ and $|Q-X_{i}| \ge 2$, then let $$X_{i+1} = X_i \cup \{v\}$$ and let $\phi_{i+1}(v)$ be any colour in $L^{\phi_i}(v)$. 
    \item[(R2)] If (R1) does not apply, then let $u$ be the smallest vertex in $V_2-X_i$ with respect to the order $<$, and let 
    $$X_{i+1}  = X_i \cup \{u\}.$$ 
    \end{enumerate}

For each vertex $u \in V_2$, let $i_u$ be the index such that 
$$u \in X_{i_u} - X_{i_u-1}.$$
In other words, $u$ is the $i_u$-th coloured vertex.

The colour $\phi_{i+1}(u)$ will be selected carefully. The main task of the remaining part of the proof is to describe how to choose the colour for $u$ in this case.

Note that when (R1) is applied to colour $v \in Q-X_{i_u-1}$, since $|Q-X_{i_u-1}| \ge 2$, $|L^{\phi_{i_u-1}}(v) | \ge d_{G-X_{i_u-1}}(v) \ge 1$. So  $L^{\phi_{i_u-1}}(v)  \ne \emptyset$, and the required colour for $v$ exists.  Once a connected component $Q$ of $G[V_1]$ becomes free with respect to $\phi_i$, then it remains free with respect to $\phi_j$ for $j \ge i$. In the following, if  the partial colouring $\phi_i$ is clear from the context, we simply say $Q$ is free or non-free to mean that $Q$ is free or non-free with respect to $\phi_i$.

We apply the above rules until all vertices of $V_2$ are coloured. Our goal is to ensure that each connected component $Q$ of $G[V_1]$ is free when all vertices of $V_2$ are coloured.

If this goal is achieved, then $G$ has an $L$-colouring by the definition of free components. 
To complete the  proof of Theorem \ref{thm-main2}, it remains to show that this goal can indeed be achieved.

\begin{obs}
    \label{obs-2}
    Assume $u \in V_2$ and $Q$ is a non-free connected component of $G[V_1]$.
    Then $N_{Q-X_{i_u-1}}(u)= N_Q(u)  \ne \emptyset$. Moreover, if $|Q-X_{i_u-1}| \ge 2$, then each non-cut vertex  of $Q-X_{i_u-1}$ is adjacent to some vertex in $V(\theta_Q)-X_{i_u-1}$.
\end{obs}
\begin{proof} 
  Since $u$ is  uncoloured  (at step $i_u-1$), no vertex in $N_Q(u)$ has been coloured by (R1), i.e., $N_Q(u) \cap X_{i_u-1} = \emptyset$.  Since $Q$ is properly connected to $V_2$,  $N_{Q-X_{i_u-1}}(u) = N_Q(u) \ne \emptyset$.

   If $v$ is a non-cut vertex of $Q-X_{i_u-1}$ that is not adjacent to any vertex in $V(\theta_Q)-X_{i_u-1}$, then (R1) can be applied to colour $v$, which contradicts the definition of $i_u$.
\end{proof}

Let $F$ be a very nice subgraph   of $\Theta(G[V_2])$.

\begin{definition}
\label{def-protector}
 Assume $Q$ is a connected component of $G[V_1]$ and $u \in V(\theta_Q)$. Then   $u$ is a \textit{protector} of $Q$, unless one of the following holds:
 \begin{enumerate}
     \item $u\theta_Q \not\in E(F)$.
     \item $u=v^*$ and $v^*$ is not an isolated vertex of $G[V_2]$.
 \end{enumerate}
\end{definition}

It follows from the definition of very nice subgraph that each $u\in V_2$ is a protector of at most two connected components of $G[V_1]$. Moreover, $v^*$ is not a protector of any connected component of $G[V_1]$, 
each connected component $Q$ of $G[V_1]$ has at most two non-protectors, and $\theta^*$ has only one non-protector (namely $v^*$). Additionally, if  $w_1$ and $w_2$ are two non-protectors of $Q$, then there is a cycle $C_Q$   contained in $V(\theta_Q)$ such that  $w_1, w_2 \in V(C_Q)$ and $Q$ is contained in the interior of $C_Q$.

\begin{definition}
    \label{def-savior}
    Assume $Q$ is a connected component of $G[V_1]$  and $u \in V_2$ is a protector of $Q$.
    Let
    \begin{eqnarray*}
        S^*_{u,Q} &=& \{ c \in L^{\phi_{i_u-1}}(u): \exists \text{ an extension $\psi$ of $\phi_{i_u-1}$ to $X_{i_u-1} \cup V_2$ such that } \\ 
    & & \text{ $\psi(u)=c$ and $L^{\psi}|_{Q-X_{i_u-1}}$ is  a bad list assignment for $Q-X_{i_u-1}$}\}.
    \end{eqnarray*}
     If $|S^*_{u,Q}| \le 3$, then we say $u$ is a {\em savior} of $Q$. 
\end{definition}
Note that if $Q$ is free with respect to $\phi_{i_u-1}$, then $S^*_{u,Q} = \emptyset$ and hence $u$ is a savior for $Q$. 

To prove that $u$ is a savior for $Q$, it suffices to present a  set $S_{u,Q}$ of colours such that $S^*_{u,Q} \subseteq S_{u,Q}$ and $|S_{u,Q}| \le 3$. In other words, the set $S_{u,Q}$ satisfies 
Property (S): {\em for any extension $\psi$ of $\phi_{i_u-1}$ to $X_{i_u-1} \cup V_2 $, if $\psi(u) \notin S_{u,Q}$, then $\psi|_{Q-X_{i_u-1}}$ is not a bad list assignment for $Q-X_{i_u-1}$. }

In the following, for each savior $u$ of a connected component $Q$ of $G[V_1]$, a set $S_{u,Q}$ with Property (S) is given.
We call $S_{u,Q}$ the {\em colour cost set} for $u$ to be a savior of $Q$. 

Let $$\mathcal{Q}_u=\{Q: \text{ $Q$ is a connected component of $G[V_1]$  and $u$ is a savior of $Q$}\}.$$
Since $u$ is the protector of at most two connected components $Q$ of $G[V_1]$, $|\mathcal{Q}_u| \le 2$.

%Note that if $Q$ is free with respect to $\phi_{i_u-1}$, then $u$ is a savior of $Q$ with $S_{u,Q}= \emptyset$. 
 
Now we can finish describing the colouring process by specifying the colour $\phi_{i_u}(u)$ for $u \in V_2$: 
$\phi_{i_u}(u)$ is any colour $c$ such that 
   $$c \in L^{\phi_{i_u-1}}(u) - \bigcup_{Q \in \mathcal{Q}_u}S_{u,Q}$$ 

Note that $\bigcup_{Q \in \mathcal{Q}_u}S_{u,Q}$ might be an empty set. In this case, $c$ can be any colour in $L^{\phi_{i_u-1}}(u)$. 

 It follows from the colouring rules that  $L^{\phi_{i_u-1}}(u)=L(u)-\{\phi_{i_u-1}(u'): u' \in  N_G(u) \cap  V_2, u' <u\}$. As $u$ has at most 5 neighbours $u' \in V_2$  for which $u' < u$, 
 $$|L^{\phi_{i_u-1}}(u)| = |L(u)| - |\{\phi_{i_u-1}(u'): u' \in  N_G(u) \cap V_2, u' <u\}|   \ge 12-5 = 7.$$
Since $|\mathcal{Q}_u| \le 2$ and $|S_{u,Q}| \le 3$ for each $Q \in \mathcal{Q}_u$, we conclude that 
$$L^{\phi_{i_u-1}}(u) - \bigcup_{Q \in \mathcal{Q}_u}S_{u,Q} \ne \emptyset,$$
and hence the required colour $c$ exists.

 \begin{lem}
     \label{lem-key}
     Each connected component $Q$ of $G[V_1]$ has a savior.
 \end{lem}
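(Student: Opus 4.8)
\textbf{Proof plan for Lemma \ref{lem-key}.}
The plan is to fix a connected component $Q$ of $G[V_1]$ and exhibit a protector $u$ of $Q$ together with a colour cost set $S_{u,Q}$ with Property (S) and $|S_{u,Q}| \le 3$; once we have this, $u$ is a savior of $Q$ by Definition \ref{def-savior}. We first dispose of the trivial cases: if $Q$ is already free with respect to $\phi_{i_u-1}$ for some protector $u$ (in particular if $|Q|=1$, or if $Q$ has no protector at all and hence $\theta_Q = \theta^*$, which cannot happen since $\theta^*$ has no non-protectors means every vertex of $V(\theta^*)$ is a protector), then $S^*_{u,Q}=\emptyset$ and we are done. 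So assume $Q$ is non-free and $|Q - X_{i_u-1}| \ge 2$ at the moment $u$ is coloured. By the colouring rules (R1) is not applicable, so by Observation \ref{obs-2} every non-cut vertex of $Q - X_{i_u-1}$ is adjacent to some vertex of $V(\theta_Q) - X_{i_u-1}$, and by Lemma \ref{lem-blocksofQ} every block of $Q-X_{i_u-1}$ is a $K_n$ with $n \le 3$ or an odd cycle.

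The key structural idea is to analyse a leaf block $B$ of $Q' := Q - X_{i_u-1}$ and use Lemma \ref{lem-suff}(P2): if after colouring all of $V_2$ the two non-root vertices of some leaf block receive different lists, or some vertex keeps an oversized list (P1), then $Q'$ is $L^\psi$-colourable. So a bad list assignment $L^\psi$ for $Q'$ forces, via Lemma \ref{lem-Gallai}, strong uniformity: on each leaf block $B$ all non-root vertices $U_B$ have the same list $L^\psi$, this list has size $d_{Q'}(\cdot) = |U_B|$ on an odd cycle block or $|U_B|+ (\text{root contribution})$ in general, and the colour sets $C_{B'}$ of the blocks through a vertex partition its list. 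For the chosen protector $u$, the neighbours of $u$ inside $Q$ lie in $V(\theta_Q)$-adjacent vertices; I would pick $u$ to be a protector that is adjacent to a non-root vertex of a leaf block $B$ of $Q'$ (Corollary \ref{cor-C=theta} guarantees, when $\theta_Q$ is finite, that at least two vertices of $C_Q=\theta_Q$ are adjacent to $U_B$, and the protector edge $u\theta_Q \in F$ together with the bound $d_F(\theta_Q)=d_{\Theta}(\theta_Q)-2$ lets us choose $u$ appropriately; the infinite-face case is handled separately using that $v^*$ already has $f(v^*)=1$). Then I would bound $S^*_{u,Q}$: a colour $c$ lies in $S^*_{u,Q}$ only if there is an extension $\psi$ with $\psi(u)=c$ making $L^\psi$ bad for $Q'$, which by the uniformity forces $c$ to be compatible with a very rigid colouring scheme of $Q'$. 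Counting how many colours $c \in L_{i_u-1}(u)$ can be "dangerous" should give at most $3$ — one for each of up to three vertex-disjoint paths from $V(\theta_{Q,i})$ to a fixed non-root vertex of $B$ guaranteed by proper connectedness, or more precisely at most the number of blocks meeting the relevant root vertex, which the $K_n, n\le 3$ / odd-cycle restriction caps.

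The main obstacle I anticipate is the careful bookkeeping that makes the count $|S_{u,Q}| \le 3$ come out, in particular handling the interaction between the root vertex of the leaf block $B$ (which may be coloured or uncoloured, and whose colour interacts with the non-root lists through Lemma \ref{lem-Gallai}), the possibility that $Q'$ is itself $2$-connected (so $B = Q'$ and there is no root), and the possibility that $B$ has very few non-root vertices (e.g. $B = K_2$ with a single non-root vertex, where P2 does not directly apply and one must instead use P1 or track a second vertex along $Q'$). I would organise the argument by cases on the structure of the leaf block $B$ of $Q'$ nearest to $u$: (a) $B$ an odd cycle, (b) $B = K_3$, (c) $B = K_2$, and within each, on whether $B = Q'$ or $B$ has a root; in each case I would name the $\le 3$ colours explicitly (colours that, when assigned to $u$, either leave $L^\psi$ oversized nowhere or destroy the forced list-equality on $U_B$) and verify $S^*_{u,Q}$ is contained in that named set. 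The choice of \emph{which} protector $u$ to use for $Q$ is the one genuinely clever step: it must be a protector (so it is not already "spent" as a non-protector and Property (S) is about the right vertex), it must see the leaf block $B$, and there must be enough protectors to go around — this is exactly what Lemma \ref{lem-nicesub} and the very nice subgraph $F$ were set up to supply.
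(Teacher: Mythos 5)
Your plan correctly identifies the shape of what is needed (choose a protector $u$ of $Q$ and exhibit a set $S_{u,Q}$ of size at most $3$ with Property~(S), via Lemma~\ref{lem-Gallai} and Lemma~\ref{lem-suff}), and your observation that (R1) being inapplicable forces every non-cut vertex of $Q-X_{i_u-1}$ to be adjacent to an uncoloured vertex of $V(\theta_Q)$ matches Observation~\ref{obs-2}. But you stop exactly where the real work begins, and you say so: \emph{``the main obstacle I anticipate is the careful bookkeeping that makes the count $|S_{u,Q}|\le 3$ come out.''} That bookkeeping is the entire content of the lemma; your heuristic for the bound (``one for each of up to three vertex-disjoint paths \dots or more precisely at most the number of blocks meeting the relevant root vertex'') is not an argument and does not survive inspection --- for instance it does not explain why the three dangerous colours live in $L^{\phi_{i_u-1}}(u)$ rather than in the lists of vertices of $Q$.

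The concrete piece of technology you are missing is the notion of a non-protector $w$ being \emph{confined by $u$ to a colour $c$} (Definition~\ref{def-forced}), the set $F_u$ of such confined non-protectors, and Lemma~\ref{lem-fu}, which shows $d_{G-X_{i_u-1}-F_u}(v)\ge 4$ for every $v\in N_{Q-X_{i_u-1}}(u)$ on pain of $u$ being a savior with $S_{u,Q}=L^{\phi_{i_u-1}}(v)\setminus\{c_i : w_i\in F_u,\ v\in N_Q(w_i)\}$. This is what converts ``bad list assignment for $Q$'' into a counting constraint and yields the bound $3$. Without it you have no mechanism for discounting the contribution of the two non-protectors $w_1,w_2$, and since a non-cut vertex of $Q$ adjacent to $u$ can have $d_{G-X}(v)$ as large as $5$, a naive count overshoots. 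You also deviate from the paper in two further places without filling the resulting holes: you propose choosing $u$ to be ``a protector adjacent to $U_B$'' rather than the \emph{last} protector of $Q$ in the order $<$, but the proof of Lemma~\ref{lem-fu} and the subsequent structural lemmas (Lemmas~\ref{lem-ge2}, \ref{lem-w1w2}, \ref{lem-twoleaves}) all rely on $u$ being last; and your suggestion to handle the case $\theta_Q=\theta^*$ via $f(v^*)=1$ is not how the paper proceeds --- it instead rules the case out entirely in Lemma~\ref{lem-not out face}. Finally, your proposed case split (a) odd cycle, (b) $K_3$, (c) $K_2$ on the block $B$ adjacent to $u$ is not the paper's split on $d_{G-X_{i_u-1}}(v_1)\in\{4,5\}$, and the hardest subcase --- $d=4$ with all non-root vertices of $B_1$ sharing a list, which needs the alternating block-chain argument with $C_1,\dots,C_k$ from Lemma~\ref{lem-Gallai} and the parity of $k-l$ --- is not foreseen in your plan at all.
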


Assume Lemma \ref{lem-key} is true. We use the colouring strategy described above. When all vertices of $V_2$ are coloured, for any connected component $Q$ of $G[V_1]$, $Q$ is free. Let $X$ be the set of coloured vertices and $\phi$ be the partial colouring of $X$. Then $Q-X$   has a proper $L^{\phi}$-colouring. Therefore, $G$ has a proper $L$-colouring. 

It remains to prove 
 Lemma \ref{lem-key}.
 
\section{Proof of Lemma \ref{lem-key}}

  Assume Lemma \ref{lem-key} is not true and $Q$ is a connected component of $G[V_1]$ which has no savior. In particular, for any vertex $u \in V_2$, $Q$ is not free  at step $i_u$, and hence $|L^{\phi_{i_u-1}}(v)| = d_{G-X_{i_u-1}}(v)$ for each vertex $v \in Q-X_{i_u-1}$.

%Let $w_1,w_2$ be the non-protectors of $Q$. 
%By Lemma \ref{lem-C=theta}, we conclude that $\theta_Q$ is a cycle and $w_1,w_2\in V(\theta_Q)$. Since $Q$ is properly connected to $V_2$, then $|V(\theta_Q)|\ge 3$ and thus there is at least one protector of $Q$.

%Note that the boundary of the face $\theta_Q$ is a closed walk that may not be a cycle. In this case $V(C_Q)$ is a proper subset of $V(\theta_Q)$.   

\begin{definition}
    \label{def-forced}
    Assume $u \le w$  are vertices in $V(\theta_Q)$ (where $u$ and $w$ are not necessarily distinct). We say $w$  is {\em confined   to colour $c$ at step $i_u$}    if  for any extension $\psi$ of $\phi_{i_u-1}$ to $X_{i_u-1}\cup V_2$ with $\psi(w) \ne c$, $L^{\psi}|_{Q-X_{i_u-1}}$ is not  bad   for $Q-X_{i_u-1}$.   We may simply say $w$ is confined at step $i_u$,  if the colour $c$ is clear from the context or is not important.   
\end{definition}

Since $u \le w$,   at step $i_u-1$, both $u$ and $w$ are uncoloured. If $w$ is confined at step $i_u$ to colour $c$, then at the step when $u$ is to be coloured, we may treat $w$ as coloured with colour $c$ (although $w$ is not coloured yet). Note that if $w$ is confined at step $i_u$ to colour $c$,   then $w$  is  confined at all later steps.  In particular, if there is a vertex $u$ such that $w$ is confined at step $i_u$ to colour $c$, then $w$ is confined to colour $c$ at step $i_w$.

\begin{lem}
    \label{lem-forcedc}
    Assume $w$ is confined at step $i_u$ to colour $c$. Then the following hold:
    \begin{enumerate}
        \item For any $v \in N_{Q-X_{i_u-1}}(w) $, $c \in L^{\phi_{i_u-1}}(v)$.
        \item $w$ is not a protector of $Q$.
    \end{enumerate}
\end{lem}
\begin{proof}
    (1) If $v \in N_{Q-X_{i_u-1}}(w) $ and $c \notin L^{\phi_{i_u-1}}(v)$, then for any extension $\psi$ of $\phi_{i_u-1}$ to $X_{i_u-1}\cup V_2$, either $\psi(w) \ne c$, or 
    $ \psi(w) = c \notin L^{\phi_{i_u-1}}(v)$ and hence $|L^{\psi}(v)| > d_{Q-X_{i_{u}-1}}(v)$. In any case, $Q-X_{i_u-1}$ is $L^{\psi}$-colourable. So  $Q$ is free at step $i_u-1$, a contradiction.   

    (2)  If $w$ is a protector of $Q$ and is confined at step $i_u$ to colour $c$, then $w$ is a savior of $Q$ with cost colour set $S_{w,Q}=\{c\}$, a contradiction.
\end{proof}

 \begin{lem}
     \label{lem-forced}
     Assume $u \le w \in V(\theta_Q)$, $v$ and $v'$ are non-cut vertices of a same block of  $Q-X_{i_u-1}$. If  $N_{V_2-X_{i_u-1}}(v) = N_{V_2-X_{i_u-1}}(v') \cup \{w\}$ and $c \in L^{\phi_{i_u-1}}(v)-L^{\phi_{i_u-1}}(v')$, then $w$ is confined at step $i_u$ to colour $c$. Consequently, $w$ is not a protector of $Q$. 
 \end{lem}
 \begin{proof}
     If $\psi$ is an extension of $\phi_{i_u-1}$ to $X_{i_u-1}\cup V_2$, and $\psi(w) \ne c$, then either $c \in L^{\psi}(v) - L^{\psi}(v')$ and hence $L^{\psi}(v) \ne L^{\psi}(v')$, or  $v$ and $v'$ have a common neighbor $u'$  in $V_2-X_{i_u-1}$ which is coloured by $c \notin L^{\phi_{i_u-1}}(v')$ and hence $|L^{\psi}(v')| > d_{Q-X_{i_u-1}}(v')$.  In any case, it follows from Lemma \ref{lem-suff}   that  $Q-X_{i_u-1}$ is $L^{\psi}$-colourable. So $w$ is confined at step $i_u$ to colour $c$. By Lemma \ref{lem-forcedc},  $w$ is not a protector of $Q$.
 \end{proof}

Assume $u$ is a protector of $Q$. 
 Let $$F_u  = \{w \in V_2: w\text{  is a non-protector of } Q,  w\text{ is confined at step $i_u$ }  \}.$$

 \begin{lem}
     \label{lem-ww'}
     Assume $u$ is a protector of $Q$. 
 If $w,w' \in F_u$, and $w,w'$ are confined to a same colour $c$, then $N_{Q-X_{i_u-1}}(w) \cap N_{Q-X_{i_u-1}}(w') = \emptyset$.
 \end{lem}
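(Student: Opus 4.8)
The plan is to argue by contradiction. Suppose $w\ne w'$ (the only case in which the statement has content), and suppose that some vertex $v\in V(Q)-X_{i_u-1}$ is adjacent, in $Q-X_{i_u-1}$, to both $w$ and $w'$. The first step is to record easy structural facts: since $w,w'\in F_u$ are confined by $u$, Definition \ref{def-forced} gives $u\le w$ and $u\le w'$, and since $V_2$-vertices are coloured only via rule (R2) while $u$ is the smallest uncoloured $V_2$-vertex at step $i_u-1$, we get $w,w'\notin X_{i_u-1}$; as $v$ is adjacent to both, $w,w'\in N_{V_2-X_{i_u-1}}(v)$. Also, by Observation \ref{obs-1} and validity of $X_{i_u-1}$, $Q-X_{i_u-1}$ is a component of $G[V_1-X_{i_u-1}]$, so $d_{G-X_{i_u-1}}(v)=d_{Q-X_{i_u-1}}(v)+|N_{V_2-X_{i_u-1}}(v)|$.

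Next I would produce a witnessing extension. Since $u$ is a protector of $Q$ and $Q$ has no savior, $Q$ is not free with respect to $\phi_{i_u-1}$: otherwise $S^*_{u,Q}=\emptyset$ by Definition \ref{def-savior}, making $u$ a savior. Hence there is an extension $\psi$ of $\phi_{i_u-1}$ to $X_{i_u-1}\cup V_2$ for which $Q-X_{i_u-1}$ is not $L^{\psi}$-colourable; since $|L^{\psi}(v')|\ge |L^{\phi_{i_u-1}}(v')|-|N_{V_2-X_{i_u-1}}(v')|=d_{Q-X_{i_u-1}}(v')$ for every $v'\in V(Q)-X_{i_u-1}$, this means $L^{\psi}|_{Q-X_{i_u-1}}$ is a bad list assignment for the Gallai-tree $Q-X_{i_u-1}$. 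Because $w$ (resp.\ $w'$) is confined by $u$ to colour $c$ and $L^{\psi}|_{Q-X_{i_u-1}}$ is bad, Definition \ref{def-forced} forces $\psi(w)=c$ and $\psi(w')=c$.

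The crux is then a short counting step at $v$. Using the standing assumption that $Q$ has no savior, $|L^{\phi_{i_u-1}}(v)|=d_{G-X_{i_u-1}}(v)=d_{Q-X_{i_u-1}}(v)+|N_{V_2-X_{i_u-1}}(v)|$. Since $w$ and $w'$ are two distinct neighbours of $v$ in $V_2-X_{i_u-1}$ both assigned colour $c$ by $\psi$, the set $\{\psi(z):z\in N_{V_2-X_{i_u-1}}(v)\}$ has at most $|N_{V_2-X_{i_u-1}}(v)|-1$ elements, whence $|L^{\psi}(v)|\ge |L^{\phi_{i_u-1}}(v)|-(|N_{V_2-X_{i_u-1}}(v)|-1)=d_{Q-X_{i_u-1}}(v)+1>d_{Q-X_{i_u-1}}(v)$. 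By the remark following Lemma \ref{lem-Gallai}, a bad list assignment of a Gallai-tree has list size exactly the degree at every vertex; since $L^{\psi}|_{Q-X_{i_u-1}}$ has a strictly larger list at $v$, it is not bad, so $Q-X_{i_u-1}$ is $L^{\psi}$-colourable (one could instead quote condition (P1) of Lemma \ref{lem-suff} directly). This contradicts the choice of $\psi$ and completes the argument.

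I expect the only real friction to be bookkeeping rather than a genuine obstacle: being sure that $w,w'$ are uncoloured at step $i_u-1$ (so $\psi$ actually assigns them colours), that the degree of $v$ splits cleanly as $d_{Q-X_{i_u-1}}(v)+|N_{V_2-X_{i_u-1}}(v)|$, and that the "no savior'' hypothesis is exactly what pins $|L^{\phi_{i_u-1}}(v)|$ to $d_{G-X_{i_u-1}}(v)$. Once the witnessing bad extension $\psi$ is in hand and $\psi(w)=\psi(w')=c$ is deduced from confinement, the contradiction is immediate.
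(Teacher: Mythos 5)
Your proof is correct and takes essentially the same approach as the paper's: the core observation in both is that confinement of $w$ and $w'$ to the same colour $c$, together with a common neighbour $v$, forces $|L^{\psi}(v)| > d_{Q-X_{i_u-1}}(v)$ for any extension in which $\psi(w)=\psi(w')=c$, so $Q-X_{i_u-1}$ is always $L^{\psi}$-colourable. The paper phrases this as a direct ``for all $\psi$'' case split (either some $\psi(w_i)\ne c$, or the list at $v$ is too big), whereas you argue contrapositively by extracting a witnessing bad extension and deriving the contradiction from it; the two formulations are logically equivalent, and your extra bookkeeping (showing $w,w'\notin X_{i_u-1}$ and unwinding $|L^{\phi_{i_u-1}}(v)|=d_{G-X_{i_u-1}}(v)$) is sound but not a new idea.
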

 \begin{proof}
     If $w,w'$ are confined to a same colour $c$, and  $v \in N_{Q-X_{i_u-1}}(w) \cap N_{Q-X_{i_u-1}}(w')$, then  for any  extension $\psi$ of $\phi_{i_u-1}$ to $X_{i_u-1}\cup V_2$, either $\psi(w) \ne c$ or $\psi(w') \ne c$, or $|L^{\psi}(v)| > d_{Q-X_{i_u-1}}(v) $. In any case, $Q-X_{i_u-1}$ is $L^{\psi}$-colourable. So $Q$ is free,  a contradiction.
 \end{proof}
 
\begin{lem}
    \label{lem-fu}
 Assume $u$ is a protector of $Q$ and  $v \in N_{Q-X_{i_u-1}}(u)$. 
Then $d_{G-X_{i_u-1}-F_u}(v)  \ge 4$. 
\end{lem}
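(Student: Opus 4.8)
We want to show that if $u$ is a protector of a savior-less component $Q$ and $v\in N_{Q-X_{i_u-1}}(u)$, then after deleting $u$ and all of $F_u$ the vertex $v$ still has degree at least $4$ in $G$. The key is that $v$ is a non-cut vertex of a block of $Q-X_{i_u-1}$ lying in a finite or infinite face $\theta_Q$, so by Lemma~\ref{lem-blocksofQ} and the "properly connected" hypothesis (more precisely its step-$i_u-1$ incarnation via Observation~\ref{obs-2} and Corollary~\ref{cor-C=theta}) $v$ has a guaranteed supply of neighbours: at most two inside $Q-X_{i_u-1}$, and the rest in $V(\theta_Q)-X_{i_u-1}$. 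So I would start by counting: $d_{G-X_{i_u-1}}(v)=|L^{\phi_{i_u-1}}(v)|$ (since $Q$ is savior-less, the hypothesis at the top of Section~5 forces equality everywhere), and I must show the neighbours of $v$ in $V_2-X_{i_u-1}$ other than $u$ and the members of $F_u$ number at least $4-d_{Q-X_{i_u-1}}(v)$, i.e.\ at least $2$ when $v$ has two $Q$-neighbours left, and more otherwise.

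**Main steps.** First I would pin down $d_{Q-X_{i_u-1}}(v)\le 2$ from Lemma~\ref{lem-blocksofQ} (each block is $K_{\le 3}$ or an odd cycle, and $v$ is a non-cut vertex of its block in the Gallai-tree $Q-X_{i_u-1}$). Second, I would invoke Observation~\ref{obs-2}: since $Q$ is non-free and (we may assume) $|Q-X_{i_u-1}|\ge 2$, every non-cut vertex of $Q-X_{i_u-1}$, in particular $v$, has a neighbour in $V(\theta_Q)-X_{i_u-1}$; combined with the three-vertex-disjoint-paths property of "properly connected" (surviving in $Q-X_{i_u-1}$ because $X_{i_u-1}$ is valid, so $Q-X_{i_u-1}$ is still connected and still properly connected to the relevant face) I would argue $v$ actually has at least two distinct neighbours in $V(\theta_Q)-X_{i_u-1}$ when $\theta_Q$ is finite, via Corollary~\ref{cor-C=theta}-style reasoning on the leaf block of $Q-X_{i_u-1}$ containing $v$. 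Third — the crux — I would show that at most one of the $\ge 2$ face-neighbours of $v$ can lie in $F_u\cup\{u\}$, or more carefully bound how many neighbours of $v$ lie in $F_u$. Here Lemma~\ref{lem-forced} is the weapon: if $w\in F_u$ is a neighbour of $v$ and $v'$ is the "partner" non-cut vertex in $v$'s block with $N_{V_2-X_{i_u-1}}(v')=N_{V_2-X_{i_u-1}}(v)\setminus\{w\}$, then $w$ being confined forces $c\in L^{\phi_{i_u-1}}(v)\setminus L^{\phi_{i_u-1}}(v')$; but Lemma~\ref{lem-ww'} says two confined $w,w'$ of the same colour cannot share a $Q$-neighbour, and the block structure (a triangle or odd cycle, where $v$ and $v'$ are forced to have nearly identical lists for the list assignment to be bad — the bad-list-assignment property from Lemma~\ref{lem-Gallai}) limits how many such $w$ can attach to $v$. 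I would finish by balancing the list-size equation $d_{G-X_{i_u-1}}(v)=|L^{\phi_{i_u-1}}(v)|$ against the colours contributed by $u$ and by each $w\in F_u\cap N(v)$, deriving the bound $d_{G-X_{i_u-1}-F_u}(v)\ge 4$.

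**The main obstacle.** The hard part will be the third step: controlling $|F_u\cap N_{Q-X_{i_u-1}}(v)|$ and simultaneously tracking which colours in $L^{\phi_{i_u-1}}(v)$ are "used up" by neighbours in $u\cup F_u$ versus by neighbours inside $Q-X_{i_u-1}$. One must carefully distinguish the case where $v$'s block is a triangle (so $v$ has two neighbours inside $Q-X_{i_u-1}$, both non-cut or one a cut-vertex) from the case where it is a longer odd cycle, because the "partner" vertex $v'$ needed to apply Lemma~\ref{lem-forced} must exist with exactly the right neighbourhood relation, and in an odd cycle the non-cut vertices adjacent in the cycle have equal lists in a bad assignment — so any $w$ distinguishing them via Lemma~\ref{lem-forced} must be a genuine "extra" neighbour of $v$ not shared with $v'$. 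I expect the argument will need a short case analysis on whether $v$ is adjacent in $\theta_Q$ to the root of its leaf block and on the position of $v$ within its block, but no heavy computation; the degree count $12-5=7$ available list colours at $u$ is comfortable enough that the bookkeeping, once set up, closes.
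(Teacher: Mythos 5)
Your overall strategy does not match what this lemma actually asserts, and I do not think it can be repaired along the lines you describe. The inequality $d_{G-X_{i_u-1}-F_u}(v)\ge 4$ is not a structural fact about the graph: configurations with $d_{G-X_{i_u-1}-F_u}(v)\le 3$ genuinely occur (the paper later derives contradictions from exactly such configurations, e.g.\ at the end of Case 2(ii), where $w_1,w_2\in F_u$ and $v_1$ has only two remaining neighbours in $Q$). The content of the lemma is that any such configuration forces $u$ to be a \emph{savior} of $Q$, contradicting the standing assumption of Section 5 that $Q$ has no savior. Consequently your ``crux'' step --- bounding $|F_u\cap N_{Q-X_{i_u-1}}(v)|$ and the number of face-neighbours of $v$ via planarity and the properly-connected condition --- is aimed at proving something that is false in general, and your concluding appeal to the $12-5=7$ count at $u$ belongs to the global colouring argument of Section 4, not to this lemma.

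The actual proof is a short cost-set argument. Suppose $d_{G-X_{i_u-1}-F_u}(v)\le 3$. Each $w_i\in F_u$ is confined to a specific colour $c_i$; by Lemma \ref{lem-forcedc}, each such $c_i$ with $v\in N_{Q-X_{i_u-1}}(w_i)$ lies in $L^{\phi_{i_u-1}}(v)$, and by Lemma \ref{lem-ww'} these colours are pairwise distinct. Since the no-savior assumption gives $|L^{\phi_{i_u-1}}(v)|=d_{G-X_{i_u-1}}(v)$, the set $S_{u,Q}=L^{\phi_{i_u-1}}(v)-\{c_i: w_i\in F_u,\ v\in N_{Q-X_{i_u-1}}(w_i)\}$ has size exactly $d_{G-X_{i_u-1}-F_u}(v)\le 3$. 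For any extension $\psi$ with $\psi(u)\notin S_{u,Q}$: if some $w_i$ is not coloured $c_i$, the restriction of $L^{\psi}$ to $Q-X_{i_u-1}$ is not bad by the definition of confinement; otherwise the only colours $v$ loses to $F_u$ are the $c_i$, and $\psi(u)$ removes nothing from $L^{\phi_{i_u-1}}(v)$ beyond them, so $|L^{\psi}(v)|>d_{Q-X_{i_u-1}}(v)$ and (P1) of Lemma \ref{lem-suff} applies. Hence $S_{u,Q}$ has Property (S), so $u$ is a savior, a contradiction. The essential idea your proposal is missing is to treat the confined vertices as already coloured and to exhibit this explicit cost set; the block-structure case analysis you anticipate is not needed here (it appears instead in Lemmas \ref{lem-leaf}--\ref{lem-twoleaves} and in Cases 1 and 2 of Section 5, all of which \emph{use} the present lemma rather than prove it).
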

  \begin{proof}
        Assume to the contrary that $d_{G-X_{i_u-1}-F_u}(v)  \le 3$. For $w_i\in F_u$, assume $w_i$ is confined at step $i_u$ to colour $c_i$. Let $$S_{u,Q} = L^{\phi_{i_u-1}}(v)- \{c_i: w_i \in F_u \text{ and } v\in N_{Q-X_{i_u-1}}(w_i)\}.$$ By Lemma \ref{lem-forcedc}, $\{c_i: w_i \in F_u,v\in N_{Q-X_{i_u-1}}(w_i)\} \subseteq L^{\phi_{i_u-1}}(v)$. By Lemma \ref{lem-ww'}, $|\{c_i: w_i \in F_u,v\in N_{Q-X_{i_u-1}}(w_i)\}| = |\{w_i \in F_u:v\in N_{Q-X_{i_u-1}}(w_i)\}|$. Since $|L^{\phi_{i_u-1}}(v)| = d_{G-X_{i_u-1}}(v)$, we have  $|S_{u,Q}| = d_{G-X_{i_u-1}-F_u}(v)  \le 3$.  Let $\psi$ be an extension of $\phi_{i_u-1}$ to $X_{i_u-1}\cup V_2$ with $\psi(u) \notin S_{u,Q}$. If   $\psi(w_i) \ne  c_i$ for  some $w_i \in F_u$, then $Q-X_{i_u-1}$ is $L^{\psi}$-colourable.   
        
        If $\psi(w_i) =  c_i$ for each $w_i \in F_u$, then as $\psi(u) \notin S_{u,Q}$,  $|L^{\psi}(v)| > d_{Q-X_{i_u-1}}(v)$, and hence $L^{\psi}|_{Q-X_{i_u-1}}$ is  not bad for $Q-X_{i_u-1}$. Therefore $u$ is a savior for $Q$ with cost colour set $S_{u,Q}$, a contradiction.
  \end{proof}

\begin{lem}
 \label{lem-1}   
 $|V_2|\ge 2$. 
\end{lem}

\begin{proof}
Assume $ V_2 =\{v^*\}$. Then $|L(v^*)|=12$. As $G[V_2]$ has a single face,     $G[V_1]$ is connected.  

%By Lemma \ref{lem-blocksofQ}, 
By Observation \ref{obs-2}, each non-cut vertex of $G[V_1]-X_{i_{v^*}-1}$ is adjacent to $v^*$. 
If $B$ is a leaf block of $G[V_1]-X_{i_{v^*}-1}$ and $v\in U_B$, then 
by Lemma \ref{lem-blocksofQ},  $d_B(v)\le 2$ and thus $d_{G}(v)\le 3$, in contrary to Lemma \ref{lem-fu}.
\end{proof}

 \begin{lem}
     \label{lem-ge2}
    Assume $u$ is a protector of $Q$. Then $|Q-X_{i_u-1}| \ge 2$. 
 \end{lem}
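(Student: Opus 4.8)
The plan is to argue by contradiction: suppose $u$ is a protector of $Q$ with $|Q - X_{i_u-1}| \le 1$. First I would dispose of the case $|Q - X_{i_u-1}| = 1$, say $Q - X_{i_u-1} = \{v\}$. Since $Q$ has no savior, $Q$ is non-free at step $i_u-1$; but a single-vertex Gallai-tree $\{v\}$ with list $L^{\phi_{i_u-1}}(v)$ of size $d_{G-X_{i_u-1}}(v)$ is $L^\psi$-uncolourable (for an extension $\psi$ to $X_{i_u-1}\cup V_2$) only if $v$ still has an uncoloured neighbour in $V_2$ after colouring all of $V_2$ — which is impossible — or, more precisely, only if $|L^\psi(v)| = 0$, i.e. all $d_{G-X_{i_u-1}}(v)$ colours of $L^{\phi_{i_u-1}}(v)$ get used by neighbours of $v$ in $V_2 - X_{i_u-1}$. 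So $v$ has at least $d_{G-X_{i_u-1}}(v)$ neighbours in $V_2 - X_{i_u-1}$, and in particular $d_{G - X_{i_u-1}}(v) = |N_{V_2 - X_{i_u-1}}(v)|$, meaning $v$ has no neighbour left in $Q - X_{i_u-1}$ (consistent with $Q-X_{i_u-1}=\{v\}$) and $u \in N_{V_2-X_{i_u-1}}(v)$. Now apply Lemma \ref{lem-fu} to this $v$: it gives $d_{G-X_{i_u-1}-F_u}(v) \ge 4$, so $v$ has at least four neighbours in $V_2 - X_{i_u-1}$ outside $F_u$. The point is that each of the colours used on the $F_u$-neighbours of $v$ is forced (by the confinement), so the "cost" of making $v$ uncolourable from the remaining $\ge 4$ free neighbours cannot be absorbed — this is exactly the contradiction packaged in Lemma \ref{lem-fu}, and I would invoke it essentially verbatim, perhaps observing that the colour cost set $S_{u,Q}$ of size $\le 3$ constructed there already contradicts $Q$ having no savior.

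Next, the case $|Q - X_{i_u-1}| = 0$: then all of $Q$ is already coloured before $u$, so $Q - X_{i_u-1}$ is the empty graph, which is trivially $L^\psi$-colourable for every extension $\psi$; hence $Q$ is free at step $i_u-1$, so $S^*_{u,Q} = \emptyset$ and $u$ is a savior of $Q$ — contradicting the standing assumption that $Q$ has no savior. So this case cannot arise at all.

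For the main argument in the $|Q-X_{i_u-1}|=1$ case I expect the cleanest route is to pick a non-cut vertex $v$ of the (single-vertex, hence trivially 2-connected) block $Q-X_{i_u-1}$ that is adjacent to $u$ — here I need $v \in N_{Q-X_{i_u-1}}(u)$, which holds because $Q$ being non-free forces $v$'s list to be exhausted by $V_2$-neighbours and, if $u$ itself were not among them, the remaining $V_2$-neighbours of $v$ all precede $u$, so they are already in $X_{i_u-1}$, contradicting $v \notin X_{i_u-1}$ having a full-size residual list; a short check. Then Lemma \ref{lem-fu} applied to $v$ gives $d_{G-X_{i_u-1}-F_u}(v) \ge 4$, but since $Q - X_{i_u-1} = \{v\}$ we have $d_{Q-X_{i_u-1}}(v) = 0$, so $N_{G-X_{i_u-1}}(v) \subseteq V_2 - X_{i_u-1}$ and $|N_{V_2-X_{i_u-1}}(v)| = |L^{\phi_{i_u-1}}(v)|$; combined with $d_{G-X_{i_u-1}-F_u}(v) \ge 4$ this means at least four colours of $L^{\phi_{i_u-1}}(v)$ are contributed by neighbours \emph{not} in $F_u$, and colouring $u$ avoiding a $3$-element set still leaves $v$ with a free colour, making $u$ a savior — the contradiction.

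The main obstacle is verifying that $v$ can be chosen adjacent to $u$ (not merely adjacent to some earlier-coloured vertex of $V_2$): this is where the ordering of $V_2$ and the definition of $i_u$ must be used carefully, but it is essentially the same reasoning already used in Lemmas \ref{lem-1} and \ref{lem-not out face} (where one shows a non-cut vertex of the relevant leaf block is adjacent to the last protector $u$), so I would model the argument on those. Once that adjacency is in hand, everything reduces to a direct application of Lemma \ref{lem-fu}.
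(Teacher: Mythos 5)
There is a genuine gap, and it is the central counting step. Lemma \ref{lem-fu} gives you the \emph{lower} bound $d_{G-X_{i_u-1}-F_u}(v)\ge 4$; to reach a contradiction you must produce a matching \emph{upper} bound, and your proposal never does. The paper gets it by first reducing to the case where $u$ is the \emph{last} protector of $Q$ (legitimate, since the $X_i$ only grow and (R1) never empties $Q$): then every protector other than $u$ lies in $X_{i_u-1}$, so $N_{V_2-X_{i_u-1}}(v)\subseteq\{u,w_1,w_2\}$ and, as $Q-X_{i_u-1}=\{v\}$, $d_{G-X_{i_u-1}}(v)=d_{V_2-X_{i_u-1}}(v)\le 3$, which contradicts the $\ge 4$. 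Your closing step instead tries to turn ``$\ge 4$ non-$F_u$ neighbours'' directly into ``$u$ is a savior,'' but that runs Lemma \ref{lem-fu} backwards: its proof constructs a cost set $S_{u,Q}$ of size exactly $d_{G-X_{i_u-1}-F_u}(v)$, so $u$ is exhibited as a savior only when that quantity is $\le 3$. With $\ge 4$ free neighbours the other three can each delete a colour from $L^{\phi}(v)$ and no $3$-element set protects $v$, so no contradiction materializes without the upper bound.

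The secondary issue is your justification that $v$ is adjacent to $u$. The claim that ``the remaining $V_2$-neighbours of $v$ all precede $u$'' is false: $v$ may be adjacent to the non-protectors $w_1,w_2$, which are coloured by (R2) possibly after $u$; and the analogy with Lemmas \ref{lem-1} and \ref{lem-not out face} does not transfer, because those settings have no non-protectors at all. (Also, (R1) requires $|Q-X_i|\ge 2$, so ``all neighbours of $v$ coloured'' would not force $v\in X_{i_u-1}$.) The adjacency is nevertheless true, for a different reason: since $X_{i_u-1}$ is valid, every coloured vertex of $Q$ has all its $V_2$-neighbours coloured, so the uncoloured vertex $u$ has no neighbour in $Q\cap X_{i_u-1}$; but $u\in V(\theta_Q)$ is adjacent to some vertex of $Q$ by proper connectedness, hence to $v$. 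Your treatment of the case $|Q-X_{i_u-1}|=0$ is fine (and is in fact vacuous, since (R1) never reduces $Q-X_i$ below one vertex).
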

 \begin{proof}
     Assume $Q-X_{i_u-1}$ consists of a single vertex $v$ for some protector $u$ of $Q$.  By Observation \ref{obs-2}, $u$ is adjacent to $v$. Since $Q$ has only  two non-protectors, we have $d_{G-X_{i_u-1}}(v)=d_{V_2-X_{i_u-1}}(v) \le 3$, which contradicts Lemma \ref{lem-fu}.
 \end{proof}

\begin{lem}
 \label{lem-not out face}  
$\theta_Q$ is a finite face of $G[V_2]$.
\end{lem}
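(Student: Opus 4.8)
Plan for proving Lemma \ref{lem-not out face} ($\theta_Q$ is a finite face of $G[V_2]$).

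The statement to prove is that the face $\theta_Q$ of $G[V_2]$ containing $Q$ cannot be the infinite face $\theta^*$. I would argue by contradiction: suppose $\theta_Q = \theta^*$. Recall that $Q$ is a connected component of $G[V_1]$ with no savior, so by Lemma \ref{lem-key}'s negation all the machinery of the preceding lemmas applies to $Q$. The key structural facts I plan to exploit are: (i) $\theta^*$ has no non-protector by the consequences of Lemma \ref{lem-nicesub} recorded after Definition \ref{def-protector}, so every vertex of $V(\theta_Q) = V(\theta^*)$ adjacent to $Q$ is a \emph{protector} of $Q$; (ii) since $v^*$ is a protector of $\theta^*$ only and $d_F(v^*) = 1$, and more generally each vertex of $V_2$ protects at most two components, the set of protectors of $Q$ carries very limited ``budget''; and (iii) the colouring process colours $v^*$ first with essentially no cost, and $f(v^*)$ may be as small as $1$.

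First I would pin down the protectors of $Q$ lying on $\theta_Q = \theta^*$ and examine a leaf block $B$ of $Q$ and a non-cut vertex $v \in U_B$. By Lemma \ref{lem-blocksofQ}, $d_Q(v) \le 2$, so $v$ has at most two neighbours inside $Q$ and hence (since $d_G(v) \ge 3$ by proper connectedness) at least one neighbour in $V_2$, all of which lie on $\theta_Q$. The plan is to show that the at most two $V_2$-neighbours of such a $v$ cannot all be protectors of $Q$ that do the necessary ``saving'' work — i.e., I want to derive a contradiction with Lemma \ref{lem-fu} (which demands $d_{G - X_{i_u-1} - F_u}(v) \ge 4$ for $v \in N_{Q - X_{i_u-1}}(u)$ when $u$ is a protector of $Q$). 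Since $v$ has degree at most $2$ inside $Q$ and at most a couple of neighbours on $\theta^*$, the total degree $d_G(v)$ is small — likely $d_G(v) \le 3$ or so after accounting for which vertices are protectors — and this will clash with the requirement $d_{G-X_{i_u-1}-F_u}(v) \ge 4$ exactly as in the proof of Lemma \ref{lem-1}. Indeed Lemma \ref{lem-1} is the template: there the single-vertex $V_2$ forced $d_G(v) \le 3$; here the infinite-face hypothesis, combined with the absence of non-protectors on $\theta^*$ and the bound on how many components a protector can serve, should force the same kind of degree bound on a non-root vertex of a leaf block of $Q$.

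The main obstacle I anticipate is handling the interaction between $Q$ and the \emph{exterior} of $\theta_Q$: a priori $\theta^*$ could be large and $v$ could have several neighbours on $V(\theta^*)$, so the naive degree bound fails. The resolution should come from Corollary \ref{cor-C=theta}-type reasoning adapted to the infinite face together with the \emph{protector} restriction: a vertex $u \in V(\theta^*)$ adjacent to $v$ is a protector of $Q$, but each such $u$ protects at most two components, and when $u$ is a protector of $Q$ with $v \in N_{Q - X_{i_u-1}}(u)$, Lemma \ref{lem-fu} forces $v$ to retain degree $\ge 4$ outside $X_{i_u-1} \cup F_u$ — yet the structure of a leaf block (Lemma \ref{lem-blocksofQ}: block is $K_{\le 3}$ or an odd cycle, $d_Q(v) \le 2$) caps the $Q$-side contribution at $2$, and the $\theta^*$-side must then supply $\ge 2$ neighbours that are simultaneously not in $X_{i_u-1}$ and not in $F_u$. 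I would argue that the colouring order (with $v^*$ first and the degeneracy ordering $<$ of $V_2$) together with the fact that on $\theta^*$ every relevant neighbour of $v$ is a protector — hence not eligible to sit in $F_u$ (which consists of \emph{non}-protectors, per its definition before Lemma \ref{lem-ww'}) — actually helps, but the counting of how many of these protectors remain uncoloured at step $i_u - 1$ is delicate. I expect the endgame to be: pick $u$ to be the protector of $Q$ among $v$'s neighbours that is coloured \emph{last}; then at step $i_u - 1$ nearly all of $v$'s $V_2$-neighbours are already coloured, so $d_{G - X_{i_u-1}}(v)$ is close to $d_Q(v) \le 2$, contradicting $d_{G-X_{i_u-1}-F_u}(v) \ge 4$ from Lemma \ref{lem-fu}. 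Making this last-coloured-protector choice rigorous, and checking it does not run afoul of the case $|Q - X_{i_u-1}| \le 1$ (where Lemma \ref{lem-fu}'s hypotheses degenerate), is the step that will require the most care.
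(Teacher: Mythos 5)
Your plan matches the paper's proof in essence: the paper picks $u$ as the last protector of $Q$ (using Lemma~\ref{lem-1} to guarantee $u \neq v^*$ and Observation~\ref{obs-2} to conclude every non-cut vertex of $Q-X_{i_u-1}$ is adjacent to $u$), then derives $d_{G-X_{i_u-1}}(v)\le d_{Q-X_{i_u-1}}(v)+1\le 3$ for a leaf-block vertex $v$, contradicting Lemma~\ref{lem-fu} --- exactly your ``last-coloured protector'' counting, modulo whether one picks $u$ globally last or last among $v$'s neighbours. Your flagged worry about $|Q - X_{i_u-1}| \le 1$ is a non-issue: Lemma~\ref{lem-fu} carries no such hypothesis, and rule (R1) cannot colour $v$ while its $V_2$-neighbour $u$ remains uncoloured, so $v\in N_{Q-X_{i_u-1}}(u)$ is automatic.
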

\begin{proof}
Assume $\theta_Q$ is the infinite face of $G[V_2]$ and $v^*\in  V(\theta_Q)$. Thus, each vertex in $V(\theta_Q)-\{v^*\}$ is a protector of $Q$. Let $u$ be the last protector of $Q$.  As $v^*$ is the first vertex in the ordering, by Lemma \ref{lem-1}, $u \ne v^*$ and  $v^*\in X_{i_u-1}$. By Observation \ref{obs-2}, each non-cut vertex of $Q-X_{i_u-1}$ is adjacent to $u$. Assume $B$ is a leaf-block of $Q-X_{i_u-1}$ and $v\in U_B$. By Lemma \ref{lem-blocksofQ}, $d_B(v)\le 2$ and hence $d_{G-X_{i_u-1}}(v)\le 3$, contrary to Lemma \ref{lem-fu}.   
\end{proof}

By Lemma \ref{lem-not out face} and Corollary \ref{cor-C=theta}, we conclude that $\theta_Q=C_Q$ and for any block $B$ of $Q$, there are three  paths contained in $V(\theta_{Q}) \cup V(Q)$ connecting $V(\theta_{Q})$ and $V(B)$, and these paths are pairwise vertex disjoint except that they may have the same end vertex in $B$.

Let $w_1,w_2$ be the non-protectors of $Q$.  Since there are three paths connecting  $V(\theta_Q)$ and $Q$, and these paths have  different end vertices in $V(\theta_Q)$, there is at least one protector of $Q$ adjacent to a vertex in $Q$.

  %\begin{lem}
      %\label{clm-k1k2c}
       %Assume $B$ is a leaf block of $Q$ and $N_G(U_B)$ contains a protector. Let $u$ be the last protector of $Q$ adjacent to some vertices of $U_B$. Then  $B-X_{i_u-1}$ is a copy of $K_2$ or an odd cycle.
  %\end{lem}
  %\begin{proof}
 %By Observation \ref{obs-2}, each vertex in $U_B$ is adjacent to some vertex of $V_2-X_{i_u-1}$. As $u$ is the last protector of $Q$ adjacent to some vertices of $U_B-X_{i_u-1}$, all the non-root vertices of $B-X_{i_u-1}$ are adjacent to some vertex of the connected component of $G$ containing $u$. Hence all the vertices of $B-X_{i_u-1}$ lie on the boundary of a face $F$ of $G[V(Q)]$. Hence $B-X_{i_u-1}$ is an outerplanar graph.  So $B-X_{i_u-1}$ does not contain $K_4$ as a subgraph.   By Lemma \ref{lem-ge2}, 
%$Q-X_{i_u-1}$  is not a single vertex, implying that  $B-X_{i_u-1} \ne K_1$. So $B-X_{i_u-1}$ is a copy of $K_2$ or an odd cycle (note that $K_3$ is also an odd cycle). 
%\end{proof}

  \begin{lem}
       \label{lem-nonprotector}
      Assume $u$ is a protector of $Q$. Then each non-cut vertex $v$ of $Q-X_{i_u-1}$ is adjacent to at least one non-protector of $Q$. 
   \end{lem}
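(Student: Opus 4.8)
Suppose the lemma fails: there is a protector $u$ of $Q$ and a non-cut vertex $v$ of $Q - X_{i_u-1}$ which is adjacent to neither $w_1$ nor $w_2$, the two non-protectors of $Q$. By Lemma \ref{lem-ge2} we know $|Q - X_{i_u-1}| \ge 2$, so $v$ really is a non-cut vertex of a genuine block and Observation \ref{obs-2} applies: $v$ is adjacent to some vertex of $V(\theta_Q) - X_{i_u-1}$. By Lemma \ref{lem-blocksofQ}, $d_{Q-X_{i_u-1}}(v) \le d_Q(v) \le 2$ (since $v$ is non-cut), and since $v \in V_1$ we have $d_{G-X_{i_u-1}}(v) \le d_G(v) \le 11$ — but more to the point, all of $v$'s neighbours in $V_2 - X_{i_u-1}$ lie in $V(\theta_Q) = V(C_Q)$, and by assumption none of them is a non-protector, so every neighbour of $v$ in $V_2 - X_{i_u-1}$ is a protector of $Q$.

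**Exploiting that $v$'s outside neighbours are protectors.** The plan is to show that one such protector neighbour $u'$ of $v$ is a savior of $Q$ with a colour cost set of size at most $3$, contradicting the assumption that $Q$ has no savior. Pick $u'$ to be the \emph{last} (largest in the order $<$) protector of $Q$ that is adjacent to $v$; then $u' \ge u$, and at step $i_{u'}-1$ all of $v$'s protector-neighbours except $u'$ are already coloured, so $|L^{\phi_{i_{u'}-1}}(v)| = d_{G-X_{i_{u'}-1}}(v)$, which is at most $|N_{Q-X_{i_{u'}-1}}(v)| + 1 \le 2 + 1 = 3$, the $+1$ coming from $u'$ itself (recall $d_{Q-X}(v) \le 2$ for a non-cut vertex, and $v$ has no uncoloured non-protector neighbour, nor a second uncoloured protector neighbour by the choice of $u'$). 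Now set $S_{u',Q} := L^{\phi_{i_{u'}-1}}(v)$; this has size $\le 3$. One then checks Property (S): if $\psi$ is any extension of $\phi_{i_{u'}-1}$ to $X_{i_{u'}-1} \cup V_2$ with $\psi(u') \notin S_{u',Q}$, then colouring $u'$ with a colour outside $L^{\phi_{i_{u'}-1}}(v)$ does not remove anything from $v$'s list, so $|L^{\psi}(v)| = |L^{\phi_{i_{u'}-1}}(v)| = d_{G-X_{i_{u'}-1}}(v) > d_{Q-X_{i_{u'}-1}}(v)$ (strict, because $u' \in N_{G-X_{i_{u'}-1}}(v) \setminus Q$), hence by (P1) of Lemma \ref{lem-suff} applied at step $i_{u'}-1$, or directly by the bad-list criterion of Lemma \ref{lem-Gallai}, $L^{\psi}|_{Q - X_{i_{u'}-1}}$ is not bad for $Q - X_{i_{u'}-1}$. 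Thus $u'$ is a savior of $Q$, contradiction.

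**Handling the edge cases.** There is a subtlety in the counting $|N_{Q-X_{i_{u'}-1}}(v)| \le 2$: I am using that $v$ is a non-cut vertex of $Q - X_{i_u-1}$, and I need it to still be a non-cut vertex of $Q - X_{i_{u'}-1}$, or at least have degree $\le 2$ there. Since $X_{i_u-1} \subseteq X_{i_{u'}-1}$ and all sets $X_i$ are valid (Definition \ref{def-valid}), removing more $V_1$-vertices keeps $Q$-components connected; $v$ might become a cut vertex of $Q - X_{i_{u'}-1}$ only if a block got split, but its degree inside $Q$ only decreases, so $d_{Q-X_{i_{u'}-1}}(v) \le d_{Q-X_{i_u-1}}(v) \le 2$ still holds, and that is all the argument uses. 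I should also confirm $|Q - X_{i_{u'}-1}| \ge 2$: this is exactly Lemma \ref{lem-ge2} applied to the protector $u'$. The main obstacle I anticipate is pinning down exactly which uncoloured vertices $v$ is adjacent to at step $i_{u'}-1$ — I must rule out that $v$ has some \emph{other} uncoloured neighbour (a second protector, or a $V_1$-vertex outside $Q - X_{i_{u'}-1}$) that would push $|L^{\phi_{i_{u'}-1}}(v)|$ above $3$. A $V_1$-neighbour of $v$ outside $Q$ is impossible since $Q$ is a component of $G[V_1]$; a second uncoloured protector neighbour is excluded by choosing $u'$ last; and uncoloured non-protector neighbours are excluded by the standing assumption. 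So the bound $|L^{\phi_{i_{u'}-1}}(v)| = d_{G-X_{i_{u'}-1}}(v) \le 3$ is secure, and the contradiction goes through.
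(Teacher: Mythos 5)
Your proposal is correct and takes essentially the same approach as the paper: invoke Observation~\ref{obs-2}, pass to the last protector $u'$ of $Q$ that is adjacent to $v$, observe that $u'$ is then the only uncoloured $V_2$-neighbour of $v$ at step $i_{u'}-1$, and bound $d_{G-X_{i_{u'}-1}}(v)\le d_{Q-X_{i_{u'}-1}}(v)+1\le 3$. The only difference is the endgame: the paper cites Lemma~\ref{lem-fu} directly (which gives $d_{G-X_{i_{u'}-1}-F_{u'}}(v)\ge 4$, hence $d_{G-X_{i_{u'}-1}}(v)\ge 4$, contradiction), whereas you re-derive that contradiction inline by constructing the cost set $S_{u',Q}=L^{\phi_{i_{u'}-1}}(v)$ and checking Property~(S) --- which is just the $F_{u'}=\emptyset$ special case of the proof of Lemma~\ref{lem-fu}. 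Citing the lemma is shorter, but your unpacked version is logically equivalent and all the side conditions you worry about (that $v$ remains uncoloured and of $Q$-degree $\le 2$ at step $i_{u'}-1$, and that $|Q-X_{i_{u'}-1}|\ge 2$ via Lemma~\ref{lem-ge2}) check out.
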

   \begin{proof}
        Assume $v$ is a non-cut vertex of $Q-X_{i_u-1}$ that is not adjacent to any non-protector of $Q$.  
        By Observation \ref{obs-2}, $v$ is adjacent to some vertex of $V_2-X_{i_u-1}$. Thus, $v$ is adjacent to a protector of $Q$. Let $u$ be the last protector of $Q$ which is adjacent to $v$. By Lemma \ref{lem-blocksofQ}, $d_{Q-X_{i_u-1}}(v) \le 2$. Hence, 
       $d_{G-X_{i_u-1}}(v) \le 3$, which contradicts Lemma \ref{lem-fu}.
\end{proof}

%  For the face $\theta_Q$ of $G[V_2]$,  $V(\theta_Q)$ may contains vertices of different components of $G[V_2]$. 
%  Let $R$ be the last connected component of $G[V_2]$ for which $V(R) \cap V(\theta_Q) \ne \emptyset$.
%   We shall show that $V(R) \cap V(\theta_Q)$ contains a savior of $Q$.

\begin{lem} \label{lem-leaf}
    If $B$ is a leaf-block of $Q$, then $w_1, w_2 \in N_{\theta_Q}(U_B)$. 
\end{lem}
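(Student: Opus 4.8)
The plan is to argue by contradiction: suppose $B$ is a leaf-block of $Q$ and, without loss of generality, $w_1 \notin N_{\theta_Q}(U_B)$. The idea is to find some protector $u$ of $Q$ and a non-cut vertex $v$ of $Q - X_{i_u-1}$ whose degree in $G - X_{i_u-1} - F_u$ is at most $3$, contradicting Lemma \ref{lem-fu}. By Lemma \ref{lem-nonprotector}, whenever $u$ is a protector of $Q$, every non-cut vertex of $Q - X_{i_u-1}$ is adjacent to at least one non-protector, i.e. to $w_1$ or $w_2$; and by Lemma \ref{lem-blocksofQ}, such a non-cut vertex has $d_{Q-X_{i_u-1}}(v) \le 2$. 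So such a $v$ has at most $2$ neighbours inside $Q - X_{i_u-1}$, possibly both $w_1$ and $w_2$ as neighbours in $V_2$, plus whatever protectors of $Q$ it sees; the point of the hypothesis $w_1 \notin N_{\theta_Q}(U_B)$ is to eliminate $w_1$ as a neighbour of the relevant $v$, leaving only $w_2$ and the protectors to account for.

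First I would choose $v \in U_B$ and pick $u$ to be the last protector of $Q$ that is adjacent to $v$ (such a protector exists: by Observation \ref{obs-2} and Lemma \ref{lem-ge2}, once $Q - X_{i_u-1}$ has at least two vertices each non-cut vertex is adjacent to some vertex of $V_2 - X_{i_u-1}$, and if it were adjacent only to non-protectors $w_1, w_2$ we could still bound its degree directly). At step $i_u - 1$, since $u$ is the last protector adjacent to $v$, all remaining protector-neighbours of $v$ lie in $X_{i_u-1}$, so $v$'s neighbours outside $X_{i_u-1}$ are: its $\le 2$ neighbours in $Q - X_{i_u-1}$, together with $u$ itself and possibly $w_1, w_2$. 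Because $v \in U_B$ and $w_1 \notin N_{\theta_Q}(U_B)$, $v$ is not adjacent to $w_1$. Hence $d_{G - X_{i_u-1}}(v) \le 2 + 1 + 1 = 4$, and this counts $u$ once; to push below the threshold of Lemma \ref{lem-fu} I need to also absorb $w_2$ into $F_u$ — i.e. show $w_2$ is confined by $u$ — which drops the bound on $d_{G-X_{i_u-1}-F_u}(v)$ to $3$. The mechanism for showing $w_2$ is confined is Lemma \ref{lem-forced}: I want to exhibit a sibling non-cut vertex $v'$ in the same block of $Q - X_{i_u-1}$ with $N_{V_2-X_{i_u-1}}(v) = N_{V_2-X_{i_u-1}}(v') \cup \{w_2\}$ and distinct lists, which forces $w_2$ and yields the contradiction.

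The main obstacle I anticipate is ensuring the structural configuration needed to apply Lemma \ref{lem-forced} (or an equivalent confinement argument) is actually present — that is, producing the partner vertex $v'$ with the right neighbourhood relation, and handling the degenerate cases where the leaf-block $B$ is small (a $K_2$, a triangle, or an odd cycle that has been partly eaten by $X_{i_u-1}$) or where $v$'s two internal neighbours behave unexpectedly. In particular, if $B$ is a triangle or $K_2$, the notion of "another non-cut vertex in the same block" may be vacuous or the two non-cut vertices may have genuinely different external neighbourhoods, so I expect to split into cases on the structure of the leaf-block of $Q - X_{i_u-1}$ containing $v$, and in each case either directly bound $d_{G-X_{i_u-1}-F_u}(v) \le 3$ using that $v$ misses $w_1$, or invoke the confinement lemmas to move $w_2$ into $F_u$. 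The symmetric case $w_2 \notin N_{\theta_Q}(U_B)$ is handled identically by swapping the roles of $w_1$ and $w_2$.
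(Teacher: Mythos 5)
Your overall plan—find a non-cut vertex $v$ of a leaf-block on which Lemma \ref{lem-fu} can be violated—starts from the right observations, and your degree count $d_{G-X_{i_u-1}}(v) \le 4$ is correct given that $v \in U_B$ misses one of $w_1, w_2$. But the mechanism you propose for getting from $4$ down to $3$ cannot work. In your notation, $N_{V_2-X_{i_u-1}}(v) = \{u, w_2\}$, and to confine $w_2$ via Lemma \ref{lem-forced} you would need a sibling $v'$ in the same block with $N_{V_2-X_{i_u-1}}(v) = N_{V_2-X_{i_u-1}}(v') \cup \{w_2\}$, which forces $N_{V_2-X_{i_u-1}}(v') = \{u\}$ (if instead $N_{V_2-X_{i_u-1}}(v') = \{u,w_2\}$, then $N_{V_2-X_{i_u-1}}(v) = N_{V_2-X_{i_u-1}}(v')$ and distinct lists already make $Q$ free by Lemma \ref{lem-suff}(P2), giving a different and immediate contradiction, but not one you can control from the hypothesis $w_1 \notin N_{\theta_Q}(U_B)$). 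But $N_{V_2-X_{i_u-1}}(v') = \{u\}$ is impossible: by Lemma \ref{lem-nonprotector} every non-cut vertex of $Q - X_{i_u-1}$ is adjacent to some non-protector of $Q$. So the partner vertex you need does not exist, and the confinement of $w_2$ never gets off the ground.

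The paper's actual proof flips the roles: it confines $u$ rather than the non-protector, using a sibling $v'$ with $N_{V_2-X_{i_u-1}}(v') = \{w_1\}$ (in the paper's notation), so that $N_{V_2-X_{i_u-1}}(v_1) = N_{V_2-X_{i_u-1}}(v') \cup \{u\}$. Lemma \ref{lem-forced} then confines $u$ to some colour, and Lemma \ref{lem-forcedc}(2) concludes $u$ is \emph{not} a protector—directly contradicting the choice of $u$. Notice this contradiction is not against Lemma \ref{lem-fu} at all; it is against the protector status of $u$. The paper also has to dispose of the sub-case where no such sibling $v'$ exists, i.e.\ where the only other non-cut vertex $v_2$ of the block is itself adjacent to $u$: there it argues that $Q - \{v_1,v_2\}$ is trapped inside the $4$-cycle $[uv_1w_1v_2]$, forcing $Q - X_{i_u-1}$ to be a single odd cycle (via Corollary \ref{cor-C=theta}), and then finds the far vertex $v_{2l+1}$ of that cycle, which is adjacent to $w_1$ but cannot be adjacent to $u$ without creating a $K_5$ minor, supplying the needed partner after all. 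You anticipate that small leaf-blocks need separate treatment, which is true, but the specific planarity/$K_5$-minor argument is the missing piece, and more fundamentally you are confining the wrong vertex.
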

\begin{proof}
    Assume to the contrary that $w_2 \notin N_{\theta_Q}(U_B)$.  By Corollary \ref{cor-C=theta},   $|N_{\theta_Q}(U_B)| \ge 2$. Thus,  $N_{\theta_Q}(U_B)$ contains a protector of $Q$. Let $u$ be the last protector of $Q$   that is adjacent to some vertex in $U_B$.
    
    By Lemma \ref{lem-fu}, $d_{G-X_{i_u-1}}(v)= |L^{\phi_{i_u-1}}(v)| \ge 4$ for each vertex $v\in N_{Q-X_{i_u-1}}(u)\cap U_B$.

Let $v_1 \in N_{Q-X_{i_u-1}}(u) \cap U_B$. As $N_{V_2-X_{i_u-1}}(v_1)  \subseteq \{u, w_1\}$,   it follows from Lemma \ref{lem-blocksofQ} that $B-X_{i_u-1}$ is an odd cycle $C=[v_1v_2\ldots v_{2l+1}]$. So   $d_{Q-X_{i_u-1}}(v_1)=2$,   $d_{G-X_{i_u-1}}(v_1)=4$,  $N_{V_2-X_{i_u-1}}(v_1)= \{u, w_1\}$. As $C$ contains at most one cut-vertex of $Q-X_{i_u-1}$, we may assume that $v_2 \in U_B$.   By assumption, $N_{V_2-X_{i_u-1}}(v_2) \subseteq \{u,w_1\}$. 
    By Lemma \ref{lem-nonprotector}, $v_2$ is adjacent to $w_1$. 

    If $v_2$ is not adjacent to $u$, then $N_{G-X_{i_u-1}}(v_2)=N_{G-X_{i_u-1}}(v_1) - \{u\}$,  and hence $u$ is confined at step $i_u$ to   colour $c \in L^{\phi_{i_u-1}}(v_1)- L^{\phi_{i_u-1}}(v_2)$, contrary to Lemma \ref{lem-forced}.

    Assume $v_2$ is also adjacent to $u$. Then $Q-\{v_1,v_2\}$ is contained in the interior of the 4-cycle $[uv_1w_1v_2]$. 
    If $Q$ has another leaf-block $B'$, then $|N_{\theta_Q}(U_{B'}) | \le 1 $,  a contradiction to Corollary \ref{cor-C=theta}. Thus, $Q-X_{i_u-1}$ is an odd cycle. 
    By  Lemma \ref{lem-nonprotector}, $v_{2l+1}$ is adjacent to $w_1$.  Therefore, $v_{2l+1}$ is not adjacent to $u$ (for otherwise $G$ contains $K_5$ as a minor). Hence,   $N_{G-X_{i_u-1}}(v_{2l+1})=N_{G-X_{i_u-1}}(v_1) - \{u\}$,  and thus $u$ is confined at step $i_u$ to colour the $c \in L^{\phi_{i_u-1}}(v_1)- L^{\phi_{i_u-1}}(v_{2l+1})$, contrary to Lemma \ref{lem-forced}. 
\end{proof}

The following   corollary follows from Lemma \ref{lem-leaf} and the planarity of $G$.

\begin{cor}
    \label{cor2}
  $Q$ has at most two leaf-blocks, and each protector $u$ of $Q$ in $V(\theta_Q)$ is adjacent to non-root vertices of exactly one leaf-block of $Q$. 
\end{cor}
\begin{proof}
    If $Q$ has three leaf-blocks, then $Q$ contains $K_{1,3}$ as a minor and by Lemma \ref{lem-leaf}, $Q \cup \{w_1,w_2\}$ contains $K_{3,3}$ as a minor, a contradiction.

    Assume $Q$ has two leaf-blocks $B$ and $B'$, and  $u$ is a protector of $Q$ adjacent to both $U_B$ and $U_{B'}$. 
    By contracting  the cycle $\theta_Q$ into a triangle containing $w_1,w_2,u$, and contracting each of $U_B$ and $U_{B'}$ into a single vertex, we obtain  a copy of $K_5$,   a contradiction. 
\end{proof}

It follows from Corollary \ref{cor2}  that  the blocks of $Q$  are ordered as $B_1,B_2, \ldots, B_k$ ($k \ge 1$) such that $B_1$ and $B_k$ are leaf-blocks and for $i=1,2, \ldots, k-1$, $B_i$ and $B_{i+1}$ share a cut-vertex $z_i$, and $z_1, z_2, \ldots, z_{k-1}$ are pairwise distinct.  

In the remainder of this section, let $u$ be the last protector of $Q$. 

By Corollary \ref{cor2}, we may assume that $N_Q(u) \subseteq U_{B_k}$, and assume $v_1 \in N_Q(u) \cap U_{B_k}$.

\begin{lem}
    \label{lem-w1w2}
    $w_1, w_2 \notin X_{i_u-1}$ and hence $(N_Q(w_1) \cup N_Q(w_2) ) \cap X_{i_u-1} = \emptyset$.
\end{lem}
\begin{proof}
    Assume to the contrary that $w_2 \in X_{i_u-1}$. By Lemma \ref{lem-blocksofQ}, $d_{Q-X_{i_u-1}}(v_1) \le 2$. By Lemma \ref{lem-fu}, $d_{G-X_{i_u-1}}(v_1) \ge 4$. It follows that $d_{Q-X_{i_u-1}}(v_1) = 2$, $N_{V_2-X_{i_u-1}}(v_1) = \{u, w_1\}$, $|L^{\phi_{i_u-1}}(v_1)| = d_{G-X_{i_u-1}}(v_1) = 4$ and
 $B_k - X_{i_u-1}$ is an odd cycle $[v_1v_2\ldots v_{2l+1}]$. 

Assume first that  $U_{B_k-X_{i_u-1}} - N_G(u) \ne \emptyset$, say $v'\in U_{B_k-X_{i_u-1}} - N_G(u)$. By Lemma \ref{lem-nonprotector}, $N_{V_2-X_{i_u-1}}(v') = \{w_1\}$ and $|L^{\phi_{i_u-1}}(v')| = d_{G-X_{i_u-1}}(v') = 3$. Let $c \in L^{\phi_{i_u-1}}(v_1) - L^{\phi_{i_u-1}}(v')$. If $\psi$ is an  extension   of $\phi_{i_u-1}$ to $V_2 \cup X_{i_u-1}$ with $\psi(u) \ne c$, then $L^{\psi}(v') \ne L^{\psi}(v_1)$ or $|L^{\psi}(v')| > d_{Q-X_{i_u-1}}(v')$. Thus, $Q-X_{i_u-1}$ is $L^{\psi}$-colourable. Therefore $u$ is confined to colour $c$ at step $i_u$, a contradiction to Lemma \ref{lem-forced}. 

Thus $U_{B_k-X_{i_u-1}} \subseteq  N_G(u)$. Since $u$ has at most two neigbours in $U_{B_k-X_{i_u-1}}$ (for otherwise $u$ has a neighbour $v_i$ with $d_{G-X_{i_u-1}}(v_i) =3$, contrary to Lemma \ref{lem-fu}), we conclude that $B_k-X_{i_u-1}$ is a triangle $[v_1v_2v_3]$ and $|U_{B_k-X_{i_u-1}}|=2$. Suppose $v_3$ is a cut-vertex of $G$, and $v_1,v_2 \in N_G(u) \cap N_G(w_1)$. This implies that $Q$ has at least two blocks, i.e., $k \ge 2$. But then $Q - \{v_1, v_2\}$ is contained in the interior of the 4-cycle $[w_1v_1uv_2]$. Hence $w_2$ is not adjacent to $U_{B_1}$ in $G$, which contradics Lemma \ref{lem-leaf}. Therefore $w_1,w_2 \notin X_{i_u-1}$. By Rule (R1), no neighbour  of $w_1,w_2$ in $Q$ is coloured at step $i_u$. Hence $(N_Q(w_1) \cup N_Q(w_2)) \cap X_{i_u-1} = \emptyset$.
\end{proof}

 By Lemma \ref{lem-blocksofQ} and Lemma \ref{lem-nonprotector}, each non-cut vertex $v$ of $Q$ is adjacent to at least one of $w_1,w_2$. As $w_1,w_2\notin X_{i_u-1}$, we have  $V(Q) \cap X_{i_u-1} = \emptyset$ and $Q-X_{i_u-1}=Q$.

\begin{lem}
    \label{lem-twoleaves}
    $Q$ has at least two blocks. 
\end{lem}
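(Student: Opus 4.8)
\textbf{Proof proposal for Lemma \ref{lem-twoleaves}.}
The plan is to assume for contradiction that $Q$ is a single block, i.e. $Q=B_1=B_k$ is 2-connected. By Lemma \ref{lem-blocksofQ}, $Q$ is then either $K_n$ with $n\le 3$ or an odd cycle; in either case every vertex of $Q$ is a non-cut vertex, so $U_Q=V(Q)$ and by Lemma \ref{lem-leaf} every vertex of $Q$ is adjacent to both $w_1$ and $w_2$. Combined with Lemma \ref{lem-blocksofQ} ($d_Q(v)\le 2$ for every non-cut vertex) this pins down the structure sharply: for $v\in V(Q)$ we have $d_{G}(v)\le 2+|N_{V_2}(v)|$, and $N_{V_2}(v)$ contains $w_1,w_2$ plus possibly some protectors.

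The key step is to exploit that $V(Q)\cap X_{i_u-1}=\emptyset$ (established just before the statement via Lemmas \ref{lem-blocksofQ}, \ref{lem-nonprotector}, \ref{lem-w1w2}), so $Q-X_{i_u-1}=Q$, and then run the same degree-confinement argument used in Lemma \ref{lem-w1w2} and Lemma \ref{lem-leaf} against $u$, the last protector of $Q$. By Lemma \ref{lem-fu}, every $v\in N_Q(u)$ has $d_{G-X_{i_u-1}}(v)=d_G(v)\ge 4$, forcing $d_Q(v)=2$ and $N_{V_2}(v)=\{u,w_1,w_2\}$; in particular $Q$ must be a cycle (if $Q=K_1$ or $K_2$, $d_Q(v)\le 1$, impossible; if $Q=K_3$ and $u$ is adjacent to two of its vertices one gets $d_{G}(v)=3$ for a third vertex, again contradicting Lemma \ref{lem-fu} since that third vertex is also in $N_Q(u)$ only if $u$ meets all three — so $u$ meets at most two vertices of $Q$). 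Then if $Q$ has a non-cut vertex $v'$ not adjacent to $u$ (which it does, being a cycle of length $\ge 3$ with $|N_Q(u)|\le 2$), Lemma \ref{lem-nonprotector} forces $N_{V_2}(v')=\{w_1,w_2\}$, so $N_G(v')=N_G(v_1)\setminus\{u\}$ for a $u$-neighbour $v_1$ on the cycle; picking $c\in L^{\phi_{i_u-1}}(v_1)-L^{\phi_{i_u-1}}(v')$, Lemma \ref{lem-forced} shows $u$ is confined to $c$ by itself, so $u$ is a savior of $Q$ — the desired contradiction. The case $|N_Q(u)|\le 1$ is even easier, since then $Q$ has a non-cut vertex adjacent to $u$ with $d_G\le 3$, directly contradicting Lemma \ref{lem-fu}.

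The main obstacle I anticipate is the bookkeeping around which planarity-forced 4-cycle traps $Q$ and which of $w_1,w_2$ then fails to reach a leaf block, exactly as in the last paragraph of Lemma \ref{lem-w1w2}; when $Q$ is a triangle with $u$ adjacent to two of its vertices, one gets $Q-\{v_1,v_2\}$ (here empty, since $Q=\{v_1,v_2,v_3\}$) together with $v_3$ inside the $4$-cycle $[w_1v_1uv_2]$, so $w_2$ cannot be adjacent to $v_3$, contradicting Lemma \ref{lem-leaf} applied to the single leaf block $Q$ — but one must be careful that this argument genuinely needs $Q$ to be a block rather than just a leaf block, which is why it yields a contradiction only here. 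The remaining verification — that the confinement argument of Lemma \ref{lem-forced} applies with $v=v_1$, $v'$ the off-$u$ cycle vertex, $N_{V_2-X_{i_u-1}}(v_1)=N_{V_2-X_{i_u-1}}(v')\cup\{u\}$ — is routine once the structure is fixed, since $v_1$ and $v'$ are non-cut vertices of the same block $Q=Q-X_{i_u-1}$.
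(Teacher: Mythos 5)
The proposal has a genuine gap, traced to a misreading of Lemma~\ref{lem-leaf}. That lemma asserts $w_1,w_2\in N_{\theta_Q}(U_B)$, i.e.\ each of $w_1,w_2$ is adjacent to \emph{some} vertex of $U_B$, not that every vertex of $U_B$ is adjacent to both. Your subsequent claim that Lemma~\ref{lem-fu} forces $N_{V_2}(v)=\{u,w_1,w_2\}$ for every $v\in N_Q(u)$ is therefore not justified: Lemma~\ref{lem-fu} only gives $d_{G-X_{i_u-1}-F_u}(v)\ge 4$, which together with $d_Q(v)\le 2$ only forces $v$ to have at least two neighbours among $\{u,w_1,w_2\}$ outside $F_u$. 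The configuration $N_{V_2}(v_1)=\{u,w_1\}$ with $d_G(v_1)=4$ is perfectly consistent with everything established so far, and the paper's proof explicitly treats it (it is the whole of its Case~2). Similarly, the step where you say ``Lemma~\ref{lem-nonprotector} forces $N_{V_2}(v')=\{w_1,w_2\}$'' is too strong: that lemma only gives that $v'$ touches at least one non-protector, so $N_{V_2}(v')$ could be a singleton, and then the hypothesis $N_{V_2-X_{i_u-1}}(v_1)=N_{V_2-X_{i_u-1}}(v')\cup\{u\}$ of Lemma~\ref{lem-forced} fails (the difference may be $\{u,w_2\}$, not a singleton).

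Concretely, your argument covers only the sub-case $d_{G-X_{i_u-1}}(v_1)=5$ of the paper's proof (its Case~1, which is short). The bulk of the paper's proof of Lemma~\ref{lem-twoleaves} is in Case~2, where $d_{G-X_{i_u-1}}(v_1)=4$: there $u$ may have one or two neighbours in the odd cycle, a confinement argument is available in some sub-cases but not others, and where confinement is unavailable the paper instead exhibits an explicit colour-cost set $S_{u,Q}$ of size at most~3 (using set differences like $L^{\phi_{i_u-1}}(v_1)\setminus L^{\phi_{i_u-1}}(v_2)$, or $L^{\phi_{i_u-1}}(v_1)\cap L^{\phi_{i_u-1}}(v_2)$, and in the hardest sub-case invoking the existence of an additional protector $w\neq u$ adjacent to $v_2$ or $v_{2l+1}$ from the three-disjoint-paths condition). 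None of this appears in your outline, so the proof as written would not go through without that additional case analysis. The small $K_3$ aside is also unconvincing as stated — Lemma~\ref{lem-fu} constrains only the vertices in $N_Q(u)$, so it cannot be used directly against the third cycle vertex not adjacent to $u$ — but this is secondary to the main missing case.
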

\begin{proof}
  Assume to the contrary that $Q$ has a single block. 

If $Q$ is a copy of $K_2$ with vertices $v_1,v_2$, then $d_Q(v_i) =1$ and $d_{G-X_{i_u-1}}(v_i) \le 4$ for $i=1,2$. 
If $d_{G-X_{i_u-1}}(v_1) = d_{G-X_{i_u-1}}(v_2)=4$, then $\{u, w_1, w_2\} \subseteq N_{V_2}(v_1), N_{V_2}(v_2)$. Together with $V(\theta_Q)$, we obtain $K_5$ as a minor, which is a contradiction. Thus we may assume that $d_{G-X_{i_u-1}}(v_2) \le 3$.  Hence
$v_1$ is adjacent to $u,w_1,w_2$, and by Lemma \ref{lem-fu}, $v_2$ is not adjacent to $u$. 
If $L^{\phi_{i_u-1}}(v_2) \not\subseteq L^{\phi_{i_u-1}}(v_1)$, then 
for any extension $\psi$ of $\phi_{i_u-1}$ to $X_{i_u-1}\cup V_2$, either $|L^{\psi}(v_1)| > d_{Q}(v_1)$ or $L^{\psi}(v_1) \not=  L^{\psi}(v_2)$,
and hence  $Q$ is $L^{\psi}$-colourable. Thus $Q$ is free, a contradiction.

Therefore we may assume that $L^{\phi_{i_u-1}}(v_2)  \subseteq L^{\phi_{i_u-1}}(v_1)$. 
Let $$S_{u,Q} = L^{\phi_{i_u-1}}(v_1) - L^{\phi_{i_u-1}}(v_2).$$ 
Then $|S_{u,Q}| \le 3$. Let $\psi$ be an extension of $\phi_{i_u-1}$ to $V_2 \cup X_{i_u-1}$ for which $\psi(u) \notin S_{u,Q}$. We shall show that $Q$ is $L^{\psi}$-colourable, and hence $u$ is a savior for $Q$ with cost colour set $S_{u,Q}$. 

Assume to the contrary that $L^{\psi}|_{Q}$ is  bad for $Q$.
Note that $|S_{u,Q}| = |L^{\phi_{i_u-1}}(v_1)| - |L^{\phi_{i_u-1}}(v_2)| = |\{w_1,w_2\} - N_G(v_2)|+1$. If $w_i$ is adjacent to $v_2$, then $\psi(w_i) \in L^{\phi_{i_u-1}}(v_2)$, for otherwise $|L^{\psi}(v_2)| >  d_{Q}(v_2)$.  Hence $\psi(w_i)  \notin S_{u,Q}$.
So $S_{u,Q} - \{\psi(w_1), \psi(w_2)\} \ne \emptyset$, which implies that $L^{\psi}(v_1) \ne L^{\psi}(v_2)$, a contradiction.

Assume $Q$ is an odd cycle $C=[v_1v_2\ldots v_{2l+1}]$. Note that for each vertex $x \in \{u, w_1, w_2\}$, $N_{Q}(x)$ is a subpath of $C$, and $u$ has at most two neighbours in $C$, for otherwise $u$ has a neighbour $v_i$ with $d_{G-X_{i_u-1}}(v_i) =3$, contrary to Lemma \ref{lem-fu}.

\medskip
{\bf Case 1}  $u$ has two neighbours in $C$.
\medskip

Assume $u$ is adjacent to  $v_1$ and $v_2$.    
By Lemma \ref{lem-nonprotector}, for $i \in \{1,2,\ldots, 2l+1\}$,  $v_i$ is adjacent to at least one non-protector.  On the other hand, at least one of $v_1, v_2$ is adjacent to only one non-protector, for otherwise $G$ contains a subdivision of $K_5$. By symmetry, we assume that $N_{V_2-X_{i_u-1}}(v_2)= \{u, w_1\}$. 
If  $N_{V_2-X_{i_u-1}}(v_3)= \{  w_1\}$, then by Lemma \ref{lem-forced}, $u$ is confined at step $i_u$  to colour $c \in L^{\phi_{i_u-1}}(v_2)- L^{\phi_{i_u-1}}(v_3)$, contrary to Lemma \ref{lem-forcedc}. If $N_{V_2-X_{i_u-1}}(v_3)= \{  w_2\}$, then 
$N_{V_2-X_{i_u-1}}(v_i)= \{  w_2\}$ for $i=3,4,\ldots, 2l+1$ and $N_{V_2-X_{i_u-1}}(v_1)= \{ u, w_2\}$. Again by Lemma \ref{lem-forced}, $u$ is confined at step $i_u$ to colour $c \in L^{\phi_{i_u-1}}(v_1)- L^{\phi_{i_u-1}}(v_{2l+1})$, contrary to Lemma \ref{lem-forcedc}. Thus $N_{V_2-X_{i_u-1}}(v_3)= \{  w_1, w_2\}$. This implies that $N_{V_2-X_{i_u-1}}(v_1)= \{  u, w_2\}$. By symmetry, $N_{V_2-X_{i_u-1}}(v_{2l+1})= \{  w_1, w_2\}$, and this implies that $l=1$, i.e.,  
$C=[v_1v_2v_3]$.

 If $L^{\phi_{i_u-1}}(v_1) = L^{\phi_{i_u-1}}(v_2) $, then for any extension $\psi$ of $\phi_{i_u-1}$ to $X_{i_u-1}\cup V_2$, either $\psi(w_1) = \psi(w_2)$ and 
 $|L^{\psi}(v_3)| > d_{Q}(v_3)$, or $\psi(w_1) \ne \psi(w_2)$, and hence $L^{\psi}(v_1) \ne L^{\psi}(v_2)$. In any case, $Q$ is $L^{\psi}$-colourable, and $u$ is a savior for $Q$ with cost colour set $S_{u,Q}=\emptyset$. 
 
 If $ L^{\phi_{i_u-1}}(v_1) \ne L^{\phi_{i_u-1}}(v_2) $, then let $S_{u,Q} = L^{\phi_{i_u-1}}(v_1) \cap L^{\phi_{i_u-1}}(v_2)$. Then $|S_{u,Q}| \le 3$, and for any extension $\psi$ of $\phi_{i_u-1}$ to $V_2 \cup X_{i_u-1}$ with $\psi(u) \notin S_{u, Q}$, either $|L^{\psi}(v_1)| > d_{Q}(v_1)$ or $|L^{\psi}(v_2)| > d_{Q}(v_2)$. Thus,
 $Q$ is $L^{\psi}$-colourable, and $u$ is a savior for $Q$ with cost colour set $S_{u,Q}$.

\medskip
{\bf Case 2}  $u$ has only one neighbour in $C$.
\medskip

Assume $u$ is adjacent to $v_1$. If $v_1$ is adjacent to both $w_1$ and $w_2$, then there exists $i \in \{2, 2l+1\}$ such that $d_{G-X_{i_u-1}}(v_i)=3$. By symmetry, we may assume that $N_{V_2-X_{i_u-1}}(v_2) = \{ w_1\}$. Let $S_{u,Q}$ be a subset of $ L^{\phi_{i_u-1}}(v_1)-L^{\phi_{i_u-1}}(v_2)$ of size $2$.   For any extension $\psi$ of $\phi_{i_u-1}$ to $X_{i_u-1}\cup V_2$ for which $\psi(u)\notin S_{u,Q}$, at least one of the following holds:
\begin{itemize}
    \item $\psi(w_1)\notin L^{\phi_{i_u-1}}(v_2)$ and hence $|L^{\psi}(v_2)|>d_{Q}(v_2)$.
    \item $\psi(w_1)\in L^{\phi_{i_u-1}}(v_2)$ and hence  $L^{\psi}(v_1)\not= L^{\psi}(v_2)$, because at least one of the two colours in $S_{u,Q}$ is contained in $L^{\psi}(v_1) - L^{\psi}(v_2)$.
\end{itemize}
Thus, $Q$ is $L^{\psi}$-colourable, and $u$ is a savior for $Q$ with cost colour set $S_{u,Q}$.

Assume $v_1$ is adjacent to only one of $w_1, w_2$. By symmetry, assume that $N_{V_2-X_{i_u-1}}(v_1) = \{u, w_1\}$. 

 If $N_{V_2-X_{i_u-1}}(v_2)=\{w_1\}$, then it follows from Lemma \ref{lem-forced} that $u$ is confined at step $i_u$ to colour $c \in L^{\phi_{i_u-1}}(v_1) - L^{\phi_{i_u-1}}(v_2)$, a contradiction.  
 If  $N_{V_2-X_{i_u-1}}(v_2)=\{w_1, w_2\}$, then $w_2$ is adjacent to $v_2, v_3, \ldots, v_{2l+1}$, and $N_{V_2-X_{i_u-1}}(v_{2l+1})=\{w_2\}$. It follows from Lemma \ref{lem-forced} that $w_1$ is confined at step $i_u$ to colour $c \in L^{\phi_{i_u-1}}(v_2) - L^{\phi_{i_u-1}}(v_{2l+1})$. But then $d_{G-X_{i_u-1}-F_u}(v_1)=3$, contrary to Lemma \ref{lem-fu}. 

 Thus, $N_{V_2-X_{i_u-1}}(v_2)=\{w_2\}$. By symmetry, $N_{V_2-X_{i_u-1}}(v_{2l+1})=\{w_2\}$. Hence for $i=2,3,\ldots, 2l+1$,    $N_{V_2-X_{i_u-1}}(v_i)=\{w_2\}$.
  
Assume $w_1$ and $w_2$ are adjacent. 
If $L^{\phi_{i_u-1}}(v_2) \subseteq L^{\phi_{i_u-1}}(v_1) $,  then let $S_{u,Q} = L^{\phi_{i_u-1}}(v_2)$. Let  $\psi$ be an extension of $\phi_{i_u-1}$ to $X_{i_u-1}\cup V_2$ for which $\psi(u)\notin S_{u,Q}$. Since $\psi(w_1) \ne \psi(w_2)$,  either  ${\psi}(u) \in L^{\phi_{i_u-1}}(v_1)-L^{\phi_{i_u-1}}(v_2)$ and hence $L^{\psi}(v_1) \ne L^{\psi}(v_2)$ or ${\psi}(u)\notin L^{\phi_{i_u-1}}(v_1)$ and hence $|L^{\psi}(v_1)|>d_{Q}(v_1)$. In any case, $Q$ is $L^{\psi}$-colourable, and $u$ is a savior for $Q$ with cost colour set $S_{u,Q}$.

If $L^{\phi_{i_u-1}}(v_2) \not\subseteq L^{\phi_{i_u-1}}(v_1) $, then $|L^{\phi_{i_u-1}}(v_1) - L^{\phi_{i_u-1}}(v_2)|\ge 2 $. Let $S_{u,Q}$ be a subset of $L^{\phi_{i_u-1}}(v_1)-L^{\phi_{i_u-1}}(v_2)$ of size $2$. For any extension $\psi$ of $\phi_{i_u-1}$ to $X_{i_u-1}\cup V_2$ for which $\psi(u)\notin S_{u,Q}$, at least one of the colours in $S_{u,Q}$ is contained in $L^{\psi}(v_1) - L^{\psi}(v_2)$. Hence 
$L^{\psi}(v_1) \ne L^{\psi}(v_2)$ and $Q$ is $L^{\psi}$-colourable. So $u$ is a savior for $Q$ with cost colour set $S_{u,Q}$.

Now assume $w_1$ and $w_2$ are not adjacent. Then $|\theta_Q| \ge 4$, and $\theta_Q$ contains another protector  of $Q$. As there are three pairwise vertex-disjoint paths connecting $\theta_Q$ to $v_2$ except that they have the same end vertex $v_2$, at most one of the three paths contains $w_2$ and at most one of the three paths contains $w_1$. Thus $Q$ has a  protector which is adjacent to $v_2$. By symmetry, $Q$ has a  protector which is adjacent to $v_{2l+1}$.
Let $w$ be the last protector of $Q$ which is adjacent to $v_2$ or $v_{2l+1}$. If $w$ is adjacent to both $v_2$ and $v_{2l+1}$, then $V(Q)\cup V(\theta_Q)$ contains $K_5$ as a minor, a contradiction. By symmetry, 
assume that $w$ is adjacent to $v_2$. Thus   $$N_{V_2-X_{i_w-1}}(v_{2l+1})  =  N_{V_2-X_{i_w-1}}(v_2)  -\{w\} .$$
This contradicts  Lemma \ref{lem-forced}. 
\end{proof}

It follows from Corollary \ref{cor2} and Lemma \ref{lem-twoleaves} that $k \ge 2$ and $Q$ has exactly two leaf-blocks $B_1$ and $B_k$. Recall that   $v_1 \in N_Q(u) \cap U_{B_k}$ and $N_Q(u) \subseteq U_{B_k}$. We complete the proof of Theorem \ref{thm-main2} by considering two cases. 
%By Lemma \ref{lem-leaf} that $w_1,w_2\in N_{V_2}(U_{B_1}) \cap N_{V_2}(U_{B_k})$.  
%In addition, $N_{Q-X_{i_u-1}}(u)\subseteq U_{B_k}$ and $v_1 \in U_{B_k} \cap N_{Q-X_{i_u-1}}(u)$.

% (Note that the blocks of $Q-X_{i_u-1}$ and the blocks of $Q$ may be different, although they have the same number of leaf-blocks. 

%By Lemma \ref{lem-fu}, $  |L^{\phi_{i_u-1}}(w)| = d_{G-X_{i-1}-F_u}(w) \ge 4$. 
%By Lemma \ref{clm-k1k2c}, each block of $Q-X_{i_u-1}$ is a copy of $K_2$ or an odd cycle. 
%As observed above, the two leaf-blocks $B_1, B_k$ of $Q$ and the two leaf-blocks $B'_1, B'_q$ of $Q-X_{i_u-1}$ are the same. Thus

\medskip
\noindent
{\bf Case 1}  $|L^{\phi_{i_u-1}}(v_1)|=d_{G-X_{i_{u-1}}}(v_1)=5$.
\medskip

Then $B_k$ is an odd cycle $C=[v_1v_2\ldots v_{2l+1}]$  and $N_{V_2-X_{i_u-1}}(v_1) = \{u, w_1, w_2\}$.  By symmetry, we may assume that $v_2\in U_{B_k}$. 

By the planarity of $G$,  $v_2$ is adjacent to  only one of $u,w_1,w_2$. By Lemma \ref{lem-fu}, $v_2$ is not adjacent to $u$.  By symmetry, assume  $N_{V_2-X_{i_u-1}}(v_2)=\{w_1\}$.  Then $ |L^{\phi_{i_u-1}}(v_2)| = d_{G-X_{i_u-1}}(v_2)=3.$ Let $S_{u,Q}$ be a subset of $ L^{\phi_{i_u-1}}(v_1)-L^{\phi_{i_u-1}}(v_2)$ of size $2$.  For any extension $\psi$ of $\phi_{i_u-1}$ to $X_{i_u-1}\cup V_2$ for which $\psi(u)\notin S_{u,Q}$, at least one of the following holds:
\begin{itemize}
    \item $\psi(w_1)\notin L^{\phi_{i_u-1}}(v_2)$ and hence $|L^{\psi}(v_2)|>d_{Q}(v_2)$.
    \item $\psi(w_1)\in L^{\phi_{i_u-1}}(v_2)$ and hence $L^{\psi}(v_1)\not= L^{\psi}(v_2)$.
\end{itemize}

So $Q$ is $L^{\psi}$-colourable, and $u$ is a savior for $Q$ with cost colour set $S_{u,Q}$.

\medskip
\noindent
{\bf Case 2} $|L^{\phi_{i_u-1}}(v_1)|=d_{G-X_{i_{u-1}}}(v_1)=4$.
\medskip

%In this case, either $B_k$ is a copy of $K_2$ and $N_{V_2-X_{i_u-1}}(v_1) = \{u, w_1, w_2\}$, or $B_k=[v_1v_2\ldots v_{2l+1}]$ is an odd cycle, and we may assume that $N_{V_2-X_{i_u-1}}(v_1) = \{u, w_1\}$.  
 
First, we show that $B_1$ has a non-root vertex $v'_1$ with $d_{G-X_{i_u-1}}(v'_1) =3$.

If $B_1=K_2$, then $B_1$ has a single non-root vertex $v'_1$. By Lemma \ref{lem-leaf}, $v'_1$ is adjacent to both $w_1$ and $w_2$, and not adjacent to $u$.  Hence $d_{G-X_{i_u-1}}(v'_1) =3.$

If $B_1$ is an odd cycle $[v'_1v'_2 \ldots v'_{2t+1}]$, then we may assume that $v'_1,v'_2$ are non-root vertices of $B_1$, and at most one of $v'_1,v'_2$  is adjacent to both $w_1,w_2$ (for otherwise $V(\theta_Q)\cup V(Q)$ contains $K_5$ as a minor, a contradiction). Thus we may assume that   $|L^{\phi_{i_u-1}}(v'_1)|=d_{G-X_{i_u-1}}(v'_1) =3$.

 \medskip
 \noindent
 {\bf Case 2(i)} For every other non-root vertex $v'$ of $B_1$, 
 $L^{\phi_{i_u-1}}(v') = L^{\phi_{i_u-1}}(v'_1)$.
\medskip

Let $c \in L^{\phi_{i_u-1}}(v_1)- L^{\phi_{i_u-1}}(v'_1)$.

Let $l$ be the largest index such that $B_l$ has a vertex $x$ such that $c \notin  L^{\phi_{i_u-1}}(x)$. 
Since  $c \notin  L^{\phi_{i_u-1}}(v'_1)$, we know that $l \ge  1$ is well-defined. 
Let 
\[
S_{u, Q} = \begin{cases} \{c\}, &\text{ if $k-l$ is even}, \cr 
L^{\phi_{i_u-1}}(v_1)-\{c\}, &\text{ if $k-l$ is odd}.
\end{cases}
\]

Assume $\psi$ is an extension of $\phi_{i_u-1}$ to $X_{i_u-1} \cup V_2$ such that $\psi(u) \notin S_{u,Q}$, and $L^{\psi}|_{Q}$ is  a bad list assignment for $Q$. By Lemma \ref{lem-Gallai}, for $1 \le i \le q$, there is a set $C_i$ of colours such that for each vertex $x$ of $Q$, $L^{\psi}(x) = \cup_{x \in B_i}C_i$ and $C_i \cap C_{i+1} = \emptyset$.

For $i=1,2$, we have $\psi(w_i) \in L^{\phi_{i_u-1}}(v'_1)$, for otherwise since 
 $L^{\phi_{i_u-1}}(v') = L^{\phi_{i_u-1}}(v'_1)$ for every  non-root vertex $v'$ of $B_1$, and each $w_i$ is adjacent to some non-root vertex of $B_1$,  we conclude that 
 there is a non-root vertex $v'$ of $B_1$ such that 
$|L^{\psi}(v')| > d_{Q}(v')$, in contrary to the assumption that $L^{\psi}|_{Q}$ is a bad list assignment for $Q$. So $\psi(w_i)\ne c$ for $i=1,2$.

Since $w_1,w_2 \in N_G(U_{B_i})$ for $i=1,k$, the subgraph of $G$ induced by $\{w_1,w_2\} \cup U_{B_1} \cup U_{B_k}$ contains a cycle $C''$ such that 
$Q - U_{B_1}  - U_{B_k}$ is contained in the interior of $C''$, and $u$ is contained in the exterior of $C''$. Thus $u$ is not adjacent to any vertex in $Q - U_{B_1}  - U_{B_k}$. So for any vertex $x $ of $Q - U_{B_1}  - U_{B_k}$, $$L^{\phi_{i_u-1}}(x)  - \{\psi(w_1), \psi(w_2)\} \subseteq L^{\psi}(x).$$
In particular, $c \in L^{\psi}(x)$ if and only if $c \in L^{\phi_{i_u-1}}(x)$.

As there is a vertex $x \in B_l$ such that 
$c \notin L^{\phi_{i_u-1}}(x)$, we know that 
$c \notin C_l$.

As  $c \in L^{\phi_{i_u-1}}(x)$ for all $x \in B_{l+1}$, we know that $c \in L^{\psi}(x)$ for all $x \in B_{l+1}$. 
This implies that $c \in C_{l+1}$. As $C_{l+1} \cap C_{l+2} = \emptyset$, we know that $c \notin C_{l+2}$.  Now $c \in L^{\psi}(x)$ for all $x \in B_{l+3}$. 
This implies that $c \in C_{l+3}$.  Repeat this argument, we conclude that   $c \in C_{l+1}, C_{l+3}, \ldots, C_{l+1+2t}, \ldots$.  
If $k-l$ is odd, this implies that $c \in C_k$, and hence $\psi(u)\ne c$. Therefore $\psi(u) \notin L^{\phi_{i_u-1}}(v_1)$ and  $|L^{\psi}(v_1)| > d_{Q}(v_1)$.  So 
$L^{\psi}|_{Q}$ is not a bad list assignment for $Q$, a contradiction.

If $k-l$ is even, then  $c \notin C_l, C_{l+2}, \ldots, C_k$. But $\psi(u) \notin S_{u,Q} = \{c\}, \psi(w_i) \ne c$ for $i=1,2$. Hence $c \in L^{\psi}(v_1)$, a contradiction.

\medskip
\noindent
{\bf Case 2(ii)} 
 $L^{\phi_{i_u-1}}(v'_1) \ne L^{\phi_{i_u-1}}(v'_2)$ for   $v'_1, v'_2 \in U_{B_1}$.
 \medskip
 
In this case  $B_1 $ is an odd cycle $C'=[v'_1v'_2\ldots v'_{2t+1}]$. Note that $N_{V_2-X_{i_u-1}}(v'_1) \ne N_{V_2-X_{i_u-1}}(v'_2)$, for otherwise for any extension $\psi$ of $\phi_{i_u-1}$ to $X_{i_u-1}\cup V_2$, we have either $L^{\psi}(v'_1) \ne L^{\psi}(v'_2)$ or $|L^{\psi}(v'_i)| > d_{Q}(v'_i)$ for some $i \in \{1,2\}$, and hence $u$ is a savior of $Q$ with cost colour set $S_{u,Q}= \emptyset$. 

If one of the $v'_1,v'_2$ is adjacent to both $w_1$ and $w_2$, without loss of generality, we assume $N_{V_2-X_{i_u-1}}(v'_2)=\{w_1,w_2\}$ and $N_{V_2-X_{i_u-1}}(v'_1) = \{w_2\}$. By Lemma \ref{lem-forced}, $w_1$ is confined at step $i_u$ to colour $c\in L^{\phi_{i_u-1}}(v'_2)-L^{\phi_{i_u-1}}(v'_1)$. 

If $B_k$ has a non-root vertex $v$ which is adjacent to $u$ and $w_1$, then $d_{G-X_{i_u-1}-F_u}(v)=3$, contrary to Lemma \ref{lem-fu}.

If each non-root vertex of $B_k$ is not adjacent to both  $u$ and $w_1$, then $B_k$ is an odd cycle $[v_1v_2\ldots v_{2l+1}]$.  Since $v_1\in N_Q(u)\cap U_{B_k}$, we assume that $N_{V_2-X_{i_u-1}}(v_1)=\{u,w_2\}$. 
Since $w_1,w_2\in N_{V_2}(U_{B_k})$, there exists a  vertex $v_i\in U_{B_k}$ such that $N_{V_2-X_{i_u-1}}(v_i)=\{w_1\}$.
It follows from Lemma \ref{lem-forcedc} that $c \in L^{\phi_{i_u-1}}(v_i)$. Then let $S_{u,Q}$ be a subset of $L^{\phi_{i_u-1}}(v_1)-(L^{\phi_{i_u-1}}(v_i)-\{c\})$ of size $2$.

For any extension $\psi$ of $\phi_{i_u-1}$ to $X_{i_u-1}\cup V_2$ for which $\psi(u)\notin S_{u,Q}$,  $\psi (w_1)\not=c$ or $\psi (w_1)=c$ but then at least one of the colours in $S_{u,Q}$ is contained in $L^{\psi}(v_1) - L^{\psi}(v_i)$ and hence 
$L^{\psi}(v_1) \ne L^{\psi}(v_i)$. In any case $Q$ is $L^{\psi}$-colourable. So $u$ is a savior for $Q$ with cost colour set $S_{u,Q}$.

%Assume $\psi$ is an extension  of $\phi_{i_u-1}$ to $X_{i_u-1}\cup V_i$ such that $L^{\psi}|_{Q-X_{i_u-1}}$ is bad for $Q-X_{i_u-1}$. Then $\psi(w_1)=c$, as $w_1$ is confined at step $i_u$ to colour $c$.  We have $\psi(w_2) \ne c$, for otherwise $|L^{\psi}(v'_2)|>d_{Q-X_{i_u-1}}(v'_2)$. But then $L^{\psi}(v_1)$ contains at least one colour in $S_{u,Q}$ $L^{\psi}(v'_1)\not=L^{\psi}(v'_2)$, a contradiction.  

Assume each of $v'_1$ and $v'_2$ is adjacent to exactly one non-protector, say  $N_{V_2-X_{i_u-1}}(v'_1)= \{w_2\}$ and $ N_{V_2-X_{i_u-1}}(v'_2)= \{w_1\}$. Assume $\psi$ is an extension  of $\phi_{i_u-1}$ to $X_{i_u-1}\cup V_i$ such that $L^{\psi}|_{Q}$ is bad for $Q$. As $L^{\phi_{i_u-1}}(v'_1) \ne L^{\phi_{i_u-1}}(v'_2)$ and $L^{\psi}(v'_1) = L^{\psi}(v'_2)$,
we conclude that $w_1$ is confined at step $i_u$ to 
the unique colour $c_1 \in L^{\psi}(v'_1) - L^{\psi}(v'_2)$ 
and $w_2$ is confined at step $i_u$ to the unique colour $c_2 \in L^{\psi}(v'_2) - L^{\psi}(v'_1)$. Thus $w_1, w_2 \in F_u$, and hence $d_{G-X_{i_u-1}-F_u}(v_1)  \le 3$, contrary to Lemma \ref{lem-fu}. This completes the proof of Lemma \ref{lem-key}. \qed

	\end{document}